\documentclass[oneside,english]{amsart}
\usepackage[T1]{fontenc}
\usepackage[latin9]{inputenc}
\usepackage{color}
\usepackage{float}
\usepackage{mathtools}
\usepackage{amstext}
\usepackage{amsthm}
\usepackage{amssymb}
\usepackage{stmaryrd}
\usepackage{graphicx}
\PassOptionsToPackage{normalem}{ulem}
\usepackage{ulem}

\makeatletter


\newcommand*\LyXZeroWidthSpace{\hspace{0pt}}
\newcommand{\lyxmathsym}[1]{\ifmmode\begingroup\def\b@ld{bold}
  \text{\ifx\math@version\b@ld\bfseries\fi#1}\endgroup\else#1\fi}

\numberwithin{equation}{section}
\theoremstyle{plain}
\newtheorem{thm}{\protect\theoremname}[section]
\theoremstyle{definition}
\newtheorem{defn}[thm]{\protect\definitionname}
\theoremstyle{remark}
\newtheorem{rem}[thm]{\protect\remarkname}
\theoremstyle{plain}
\newtheorem{prop}[thm]{\protect\propositionname}
\theoremstyle{plain}
\newtheorem*{prop*}{\protect\propositionname}
\theoremstyle{plain}
\newtheorem{cor}[thm]{\protect\corollaryname}
\theoremstyle{plain}
\newtheorem*{thm*}{\protect\theoremname}
\theoremstyle{definition}
\newtheorem{example}[thm]{\protect\examplename}
\theoremstyle{remark}
\newtheorem*{acknowledgement*}{\protect\acknowledgementname}

\makeatother

\usepackage{babel}
\providecommand{\acknowledgementname}{Acknowledgement}
\providecommand{\corollaryname}{Corollary}
\providecommand{\definitionname}{Definition}
\providecommand{\examplename}{Example}
\providecommand{\propositionname}{Proposition}
\providecommand{\remarkname}{Remark}
\providecommand{\theoremname}{Theorem}

\begin{document}
\title[Transformation Theorem for Signature-Type Changing Manifolds ]{A Transformation Theorem for Transverse Signature-Type Changing Semi-Riemannian Manifolds}
\author{W. Hasse \& N. E. Rieger}
\address{W. Hasse, Institute for Physics and Astronomy, Technical University
Berlin, Hardenbergstr. 36, 10623 Berlin, Germany, and Wilhelm Foerster
Observatory Berlin, Munsterdamm 90, 12169 Berlin, Germany }
\address{N. E. Rieger, Department of Mathematics, University of Zurich, Winterthurer\-strasse
190, 8057 Zurich, Switzerland}
\curraddr{Mathematics Department, Yale University, 219 Prospect Street, New
Haven, CT 06520, USA}
\email{n.rieger@math.uzh.ch}
\begin{abstract}
In the early eighties Hartle and Hawking put forth that signature-type
change may be conceptually interesting, paving the way to the so-called
``no boundary'' proposal for the initial
conditions for the universe. Such singu\-larity-free universes have
no beginning, but they do have an origin of time. In mathematical
terms, we are dealing with signature-type changing manifolds where
a Riemannian region is smoothly joined to a Lorentzian region at the surface of transition
where time begins. We present a transformation prescription to transform an arbitrary
 Lorentzian manifold into a singular signature-type changing manifold.
Then we establish the Transformation Theorem, asserting that, conversely,
under certain conditions, such a metric $(M,\tilde{g})$ can be obtained
from some Lorentz metric $g$ through the aforementioned transformation
procedure. By augmenting the assumption by certain constraints, mutatis
mutandis, the global version of the Transformation Theorem can be
proven as well. 
In conclusion, we make use of the Transformation Prescription to demonstrate
that the induced metric on the hypersurface of signature change is
either Riemannian or a positive semi-definite pseudo metric.
\end{abstract}

\maketitle
\address{Corresponding author:}

\address{N. E. Rieger, Department of Mathematics, Yale University, USA}

\email{n.rieger@yale.edu}
~\\ 

\section{Introduction }

In 1983 Hartle and Hawking~\cite{Hartle Hawking - Wave function of the Universe}
proposed that signature-type change may be conceptually intriguing,
leading to the so-called ``no boundary''
proposal for the initial conditions for the universe. According to
the Hartle-Hawking proposal the universe has no beginning because
there is neither a singularity nor a boundary to the spacetime (although
singularities can be considered points where curves stop at finite
parameter value, a general definition can be deemed to  be difficult~\cite{Geroch - What is a singularity in general relativity}).
In such singularity-free universes, there is no distinct beginning,
but they do possess an origin of time. While from a physical viewpoint,
a signature-type change may be used to avoid the singularities of
general relativity (e.g. the big bang can be replaced by a Riemannian
domain prior to the emergence of time), from a mathematical point
of view, the ``no boundary'' proposal
can be taken as ``no boundary'' condition
for manifolds~\cite{Gibbons + Hartle,Halliwell + Hartle}. 

~

It is essential to note that a signature-changing metric is inherently
degenerate or discontinuous at the boundary of the hypersurface~\cite{Dray - Gravity and Signature Change}.
We allow the metric to become degenerate, thus defining a singular
semi-Riemannian manifold as a generalization of a semi-Riemannian
manifold where the metric can become degenerate (singular). Hence,
in the present article we will discuss singular semi-Riemannian manifolds
for which the metric constitutes a smooth $(0,2)$-tensor field that
is degenerate at a subset $\mathcal{H}\subset M$, where the bilinear
type of the metric changes upon crossing $\mathcal{H}$. 

~

In this article we present a procedure, called the Transformation
Prescription, to transform an arbitrary Lorentzian manifold into a
singular transverse signature-type changing manifold. Then we prove
the so-called Transformation Theorem saying that locally the metric
$\tilde{g}$ associated with a transverse signature-type changing
manifold $(M,\tilde{g})$ is equivalent to the metric obtained from
some Lorentzian metric $g$ via the aforementioned Transformation
Prescription. By augmenting the assumption by certain constraints,
mutatis mutandis, the global version of the Transformation Theorem
is proven as well.

~

In conclusion, we make use of the Transformation Prescription to demonstrate
that the induced metric on the hypersurface $\mathcal{H}$ is either
Riemannian or a positive semi-definite pseudo metric.

\subsection{Transverse type-changing singular semi-Riemannian manifold}

Unless otherwise specified, the considered manifolds, denoted as $M$
with dimension $\dim(M)=n$, are assumed to be locally homeomorphic
to $\mathbb{R}^{n}$. Furthermore, these manifolds are expected to
be connected, second countable, and Hausdorff. This definition also
implies that all manifolds have empty boundary. Furthermore, we will
usually assume that the manifolds we consider are smooth as well.
If not indicated otherwise, all associated structures and geometric
objects (curves, maps, fields, differential forms etc.) are assumed
to be smooth. 
\begin{defn}
A \textit{singular semi-Riemannian manifold} is a generalization of
a semi-Riemannian manifold. It is a differentiable manifold having
on its tangent bundle a symmetric bilinear form which is allowed to
become degenerate.
\end{defn}

\vspace{0.001\paperheight}

\begin{defn}
\label{Definition. signature change}Let $(M,g)$ be a singular semi-Riemannian
manifold and let be $p\in M$. We say that the metric changes its
signature at a point $p\in M$ if any neighborhood of $p$ contains
at least one point $q$ where the metric's signature differs from
that at $p$.
\end{defn}

We side with Kossowski and Kriele~\cite{Kossowksi + Kriele - Signature type change and absolute time in general relativity}
and require that $(M,g)$ should be a semi-Riemannian manifold of
$\dim M\geq2$ with $g$ being a smooth, symmetric, degenerate $(0,2)$-tensor
on $M$, and $\mathcal{H}:=\{q\in M\!\!:g\!\!\mid_{q} is\;degenerate\}$.{\small{}
}In addition, we assume that one connected component, $M_{R}$, of
$M\setminus\mathcal{H}$ is Riemannian, while all other connected
components $(M_{L_{\alpha}})_{\alpha\in I}\subseteq M_{L}\subset M$
are Lorentzian. What is more, we assume throughout that the point
set $\mathcal{H}$ where $g$ becomes degenerate, is not empty, indicating
that $\mathcal{H}$ is the locus where the rank of $g$ fails to be
maximal.

~\\ Moreover, we impose the following two conditions~\cite{Kossowksi + Kriele - Signature type change and absolute time in general relativity}:
\begin{enumerate}
\item We call the metric $g$ a codimension-$1$ \textit{transverse type-changing
metric} if $d(\det([g_{\mu\nu}]))_{q}\neq0$ for any $q\in\mathcal{H}$
and any local coordinate system $\xi=(x^{0},\ldots,x^{n-1})$ around
$q$. Then we call $(M,g)$ a \textit{transverse type-changing singular
semi-Riemannian manifold}~\cite{Aguirre - Transverse Riemann-Lorentz type-changing metrics with tangent radical,Kossowksi + Kriele - Signature type change and absolute time in general relativity}.~\\ This
implies that the subset $\mathcal{H}\subset M$ is a smoothly embedded
hypersurface in $M$, and the bilinear type of $g$ changes upon crossing
$\mathcal{H}$. Moreover, at every point $q\in\mathcal{H}$, there
exists a one-dimensional subspace $\textrm{Rad}_{q}\subset T_{q}M$ (referred
to as the radical at $q\in\mathcal{H}$) that is orthogonal to all
of $T_{q}M$.~\\{}
\item \textbf{The radical $\textrm{Rad}_{q}$ is transverse} to $\mathcal{H}$ for any $q\in\mathcal{H}$.
Henceforward, we assume throughout that $(M,g)$ is a singular transverse
type-changing semi-Riemannian manifold with a \textit{transverse radical},
unless explicitly stated otherwise.
\end{enumerate}
~
\begin{rem}
The radical at $q\in\mathcal{H}$ is defined as the subspace $\textrm{Rad}_{q}:=\{w\in T_{q}M:g(w,\centerdot)=0\}$.
This means $g(v_{q},\centerdot)=0$ for all $v_{q}\in \textrm{Rad}_{q}$. Note
that the radical can be either transverse or tangent to the hypersurface
$\mathcal{H}$. The radical $\textrm{Rad}_{q}$ is called \textit{transverse}~\cite{Kossowski - The Volume BlowUp and Characteristic Classes for Transverse}
if $\textrm{Rad}_{q}$ and $T_{q}\mathcal{H}$ span $T_{q}M$ for any $q\in\mathcal{H}$,
i.e. $\textrm{Rad}_{q}\oplus T_{q}\mathcal{H}=T_{q}M$. This means that $\textrm{Rad}_{q}$
is not a subset of $T_{q}\mathcal{H}$, and $\textrm{Rad}_{q}$ is obviously
not tangent to $\mathcal{H}$ for any $q$.
\end{rem}

\subsection{Statement of results}

We state the first result as a proposition, called the \textit{Transformation
Prescription}, where we present a procedure to transform an arbitrary
Lorentzian manifold into a signature-type changing manifold. 

\begin{prop}[Transformation Prescription]
\label{Proposition Transformation-Prescription-1}
Let $(M,g)$ be an (not necessarily time-orientable) Lorentzian manifold
of $\dim n\geq2$. Then we obtain a signature-type changing metric
$\tilde{g}$ via the Transformation Prescription $\tilde{g}=g+fV^{\flat}\otimes V^{\flat}$,
where $f\colon M\longrightarrow\mathbb{R}$ is a smooth transformation
function and $V$ is one of the unordered pair $\{V,\lyxmathsym{\textemdash}V\}$
of a global smooth non-vanishing line element field.
\end{prop}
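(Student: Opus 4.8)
The plan is to verify, in order, that $\tilde{g}$ is a globally well-defined smooth symmetric $(0,2)$-tensor, and then to determine its bilinear type pointwise, showing that it is Lorentzian, degenerate, or Riemannian according to the sign of a single scalar factor. First I would recall why the object $V$ is available: on any Lorentzian manifold the $g$-timelike directions single out a smooth one-dimensional distribution --- for instance the eigenline of $h^{-1}g$ belonging to its unique negative eigenvalue, where $h$ is an auxiliary Riemannian metric, an eigenline along which $g(v,v)<0$ --- so a global smooth non-vanishing line element field $\{V,-V\}$ with $g(V,V)<0$ exists with no time-orientability assumption. The decisive point for global well-definedness is that $(-V)^{\flat}\otimes(-V)^{\flat}=V^{\flat}\otimes V^{\flat}$, so $\tilde{g}=g+f\,V^{\flat}\otimes V^{\flat}$ is independent of the chosen representative of the unordered pair and is therefore a smooth symmetric $(0,2)$-tensor on all of $M$; this is exactly why the prescription must be phrased with a line element field rather than a globally defined vector field.

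Next I would compute the signature in a $g$-orthonormal frame $\{e_{0},\dots,e_{n-1}\}$ adapted to $V$, with $e_{0}=V/\sqrt{-g(V,V)}$ timelike and $e_{1},\dots,e_{n-1}$ spacelike. Since $V^{\flat}(e_{i})=g(V,e_{i})=0$ for $i\geq 1$ while $V^{\flat}(e_{0})\neq 0$, the tensor $V^{\flat}\otimes V^{\flat}$ is represented by $-g(V,V)\,\mathrm{diag}(1,0,\dots,0)$, so $\tilde{g}$ has matrix $\mathrm{diag}\bigl(-1-f\,g(V,V),\,1,\dots,1\bigr)$. Thus the perturbation acts only along the timelike direction and leaves the spacelike block positive definite; equivalently, by the matrix determinant lemma one gets the closed form
\[
\det(\tilde{g})=\det(g)\,\bigl(1+f\,g(V,V)\bigr).
\]
The content of this step is that \emph{exactly one} eigenvalue of $g$ can change sign, namely the temporal one, whose new value is $-\lambda$ with $\lambda:=1+f\,g(V,V)$.

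From here the conclusion is immediate by reading off the trichotomy from the sign of $\lambda$: where $\lambda>0$ the temporal eigenvalue stays negative and $\tilde{g}$ is Lorentzian; where $\lambda<0$ it becomes positive and $\tilde{g}$ is Riemannian (here $n\geq 2$ guarantees at least one genuinely spacelike direction survives); and where $\lambda=0$ the metric $\tilde{g}$ degenerates, with radical spanned by $V$. Hence $\mathcal{H}=\{p\in M: 1+f(p)\,g_{p}(V,V)=0\}$ is precisely the locus of degeneracy, and choosing the transformation function $f$ so that $1+f\,g(V,V)$ assumes both signs yields a genuine signature-type change.

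I expect the main obstacle to be conceptual rather than computational. The rank-one perturbation is routine, so the two points needing care are (a) the global well-definedness through the sign-invariance of $V^{\flat}\otimes V^{\flat}$, which is what makes the line element field --- as opposed to a vector field, unavailable without time-orientability --- the correct input; and (b) guaranteeing that the flip is codimension one and lands in the Riemannian regime rather than in an intermediate signature. Point (b) is secured precisely by taking $V$ \emph{timelike}: had $V$ been spacelike, the same computation would flip a spacelike eigenvalue and produce a metric with two negative directions instead of a Riemannian region. The residual care is simply to arrange $f$ so that $\mathcal{H}$ is nonempty, which is what ``obtaining a signature-type changing metric'' requires.
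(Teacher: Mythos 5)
Your proof is correct and follows essentially the same route as the paper: both compute the rank-one perturbation in a $g$-orthonormal frame adapted to the timelike line element field and read off the Lorentzian/degenerate/Riemannian trichotomy from the sign of the single modified temporal eigenvalue. The only substantive difference is that the paper normalizes $g(V,V)=-1$, so its degeneracy locus is $f^{-1}(1)$ rather than your $\{p\in M\colon 1+f(p)\,g_{p}(V,V)=0\}$; your additional observations (the sign-invariance of $V^{\flat}\otimes V^{\flat}$, the eigenline construction of $V$, and the determinant identity) are correct refinements of points the paper handles by citation or leaves implicit.
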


Our main theorem, the so-called \textit{Transformation Theorem}, makes
the stronger assertion that, locally, the metric $\tilde{g}$ associated
with a signature-type changing manifold $(M,\tilde{g})$ is equivalent
to the metric obtained from a Lorentzian metric $g$ via the aforementioned
Transformation Prescription. 

\begin{thm}[Local Transformation Theorem, transverse radical]
\label{Transformation-Theorem-(local)-1}
For every $q\in M$ there exists a neighborhood $U(q)$, such that
the metric $\tilde{g}$ associated with a signature-type changing
manifold $(M,\tilde{g})$ is a transverse, type-changing metric with
a transverse radical if and only if $\tilde{g}$ is locally obtained
from a Lorentzian metric $g$ via the Transformation Prescription~\ref{Proposition Transformation-Prescription-1}
$\tilde{g}=g+fV^{\flat}\otimes V^{\flat}$, where, for all $q\in \mathcal H \coloneqq f^{-1}(1)=\{p\in M\colon f(p)=1\}$,
\[
df(q)\neq0 \quad \text{and} \quad (df(V))(q)\neq0.
\]
\end{thm}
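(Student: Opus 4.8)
The plan is to prove the two implications separately, working near a fixed point $q_{0}$. Away from $\mathcal{H}$ the conditions attached to $\mathcal{H}=f^{-1}(1)$ are vacuous (on a Lorentzian piece one simply takes $f\equiv0$, $g=\tilde g$), so the content sits at $q_{0}\in\mathcal{H}$, on which I focus.

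For the direction asserting that a metric produced by the Transformation Prescription is transverse type-changing with a transverse radical, I would argue by the matrix determinant lemma. Writing $V^{\flat}=g(V,\cdot)$ and normalizing $V$ so that $g(V,V)=-1$, one has $(V^{\flat})_{\mu}g^{\mu\nu}(V^{\flat})_{\nu}=g(V,V)$, whence in any chart $\det[\tilde g_{\mu\nu}]=\det[g_{\mu\nu}]\,(1+f\,g(V,V))=\det[g_{\mu\nu}]\,(1-f)$. Since $g$ is Lorentzian, $\det[g_{\mu\nu}]\neq0$, so the degeneracy locus is exactly $\{f=1\}=\mathcal{H}$. Differentiating and restricting to $\mathcal{H}$, where $f=1$, gives $d(\det[\tilde g_{\mu\nu}])=-\det[g_{\mu\nu}]\,df$, nonzero precisely because $df\neq0$; this is the transverse type-changing condition. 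A direct computation gives $\tilde g(V,\cdot)=(1-f)V^{\flat}$, so $V$ spans the radical along $\mathcal{H}$; as $\tilde g$ is a rank-one perturbation of the nondegenerate $g$ its rank is at least $n-1$, hence $Rad_{q}=\operatorname{span}(V_{q})$ is one-dimensional. Finally $T_{q}\mathcal{H}=\ker(df_{q})$, so $Rad_{q}+T_{q}\mathcal{H}=T_{q}M$ is equivalent to $df_{q}(V_{q})=(V(f))(q)\neq0$, the standing hypothesis.

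For the converse I would reconstruct $g$, $f$ and $V$ from $\tilde g$. First extend the transverse one-dimensional radical along $\mathcal{H}$ to a nonvanishing vector field $V$ on a neighborhood $U$ of $q_{0}$ and pass to flow-box coordinates $(x^{0},\dots,x^{n-1})$ in which $V=\partial_{0}$ and $\mathcal{H}=\{x^{0}=0\}$. Because $V$ spans the radical on $\mathcal{H}$, one has $\tilde g_{0\mu}\!\mid_{x^{0}=0}=0$, so by Hadamard's lemma $\tilde g_{0\mu}=x^{0}h_{\mu}$ with $h_{\mu}$ smooth; a Schur-complement expansion of $\det[\tilde g_{\mu\nu}]$ then shows that the transverse type-changing condition forces $h_{0}\!\mid_{\mathcal{H}}\neq0$. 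I would then set $f:=1+\tilde g_{00}$, so that $\mathcal{H}=f^{-1}(1)$, and define $g$ by $g_{00}:=-1$, $g_{0i}:=-h_{i}/h_{0}$ and $g_{ij}:=\tilde g_{ij}-f\,g_{0i}g_{0j}$, which is the unique solution of $\tilde g=g+f\,V^{\flat}\otimes V^{\flat}$ subject to $g(V,V)=-1$. The identities $df\!\mid_{\mathcal{H}}=h_{0}\,dx^{0}\neq0$ and $(V(f))\!\mid_{\mathcal{H}}=h_{0}\neq0$ are then immediate.

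The main obstacle lies in this converse construction, on two interlocking points. First, smoothness of $g$ across $\mathcal{H}$: the naive expression $g_{0i}=-\tilde g_{0i}/\tilde g_{00}$ is of the form $0/0$ on $\mathcal{H}$, and it is only the common Hadamard factor $x^{0}$ together with $h_{0}\neq0$ (i.e.\ the transverse type-changing hypothesis) that yields the smooth quotient $-h_{i}/h_{0}$; this is where the assumptions $df\neq0$ and $V(f)\neq0$ do the real work. Second, verifying that the reconstructed $g$ is genuinely Lorentzian: changing to the $g$-orthogonal frame $e_{0}=V$, $e_{i}=\partial_{i}+g_{0i}\partial_{0}$ block-diagonalizes $g$ as $\operatorname{diag}\bigl(-1,\ \tilde g_{ij}+(1-f)g_{0i}g_{0j}\bigr)$, whose lower block restricts on $\mathcal{H}$ to the induced metric $\tilde g_{ij}\!\mid_{\mathcal{H}}$. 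One must show this induced metric is positive definite — it is nondegenerate by transversality of the radical and positive definite by continuity from the Riemannian component adjacent to $\mathcal{H}$ — and then shrink $U$ so that positive definiteness of the lower block, hence the Lorentzian signature of $g$, persists off $\mathcal{H}$.
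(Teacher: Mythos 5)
Your proposal is correct, and while its forward half coincides in substance with the paper's computation, your converse takes a genuinely different route. The paper reduces everything to the representation $\tilde g=g+fV^{\flat}\otimes V^{\flat}$, which it obtains by citing Propositions~\ref{prop:Choice of (V,g,f)} and~\ref{prop:Equivalence-classes-(V,g,f)}, and then, in co-moving coordinates, derives $\det[\tilde g_{\mu\nu}]=(1-f)\det(G_{\mu\nu})$ with $\det(G_{\mu\nu})\neq0$ on $\mathcal{H}$, hence $d(\det[\tilde g_{\mu\nu}])=-\det(G_{\mu\nu})\,df$ there; this yields both biconditionals ($d\det\neq0\Leftrightarrow df\neq0$, and transverse radical $\Leftrightarrow(df)(V)\neq0$) at once. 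Your forward direction obtains the same determinant identity in one line from the matrix determinant lemma, $\det[\tilde g_{\mu\nu}]=\det[g_{\mu\nu}]\bigl(1+fg(V,V)\bigr)=(1-f)\det[g_{\mu\nu}]$, and identifies the radical directly from $\tilde g(V,\cdot)=(1-f)V^{\flat}$ together with the rank-one-perturbation bound, which is cleaner and coordinate-free compared with the paper's block-matrix manipulations. Your converse, by contrast, does not presuppose that a representation exists: you reconstruct $(g,V,f)$ from $\tilde g$ in flow-box coordinates adapted to an extension of the radical, with Hadamard's lemma supplying $\tilde g_{0\mu}=x^{0}h_{\mu}$ and the transverse type-changing hypothesis supplying $h_{0}\neq0$, so that $f=1+\tilde g_{00}$, $g_{0i}=-h_{i}/h_{0}$, $g_{ij}=\tilde g_{ij}-fg_{0i}g_{0j}$ are manifestly smooth across $\mathcal{H}$, and the Lorentzian signature of $g$ follows from positive definiteness of the induced metric on $\mathcal{H}$ (nondegenerate by transversality, positive semi-definite by continuity from $M_{R}$). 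This is where your approach buys genuine rigor: the paper's Proposition~\ref{prop:Choice of (V,g,f)} is proved only on $M_{R}\cup M_{L}$, and the smoothness of the reconstructed $g$ and $f$ across $\mathcal{H}$ --- precisely the $0/0$ issue your Hadamard-factor analysis resolves --- is not verified there. What the paper's route buys instead is the flexibility of its equivalence-class picture: there $V$ may be chosen as an arbitrary $\tilde g$-timelike line element field, whereas your $V$ is tied to the radical.
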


By augmenting the assumptions by an additional constraint, mutatis
mutandis, the global version of the Transformation Theorem is proven
as well. Provided the Riemannian sector with boundary $M_{R}\cup\mathcal{H}$
possess a smoothly defined non-vanishing line element field that is
transverse to the boundary $\mathcal{H}$, then we are able to show 

\begin{thm}[Global Transformation Theorem, transverse radical]
\label{Transformation-Theorem-(global)-1}
Let $M$ be an transverse, signature-type changing manifold of $\dim(M)=n\geq2$,
which admits in $M_{R}\cup\mathcal{H}$ a smoothly defined non-vanishing
line element field that is transverse to the boundary $\mathcal{H}$.
Then the metric $\tilde{g}$ associated with a signature-type changing
manifold $(M,\tilde{g})$ is a transverse, type-changing metric with
a transverse radical if and only if $\tilde{g}$ is obtained from
a Lorentzian metric $g$ via the Transformation Prescription $\tilde{g}=g+fV^{\flat}\otimes V^{\flat}$,
where, for all $q\in \mathcal H \coloneqq f^{-1}(1)=\{p\in M\colon f(p)=1\}$,
\[
df(q)\neq0 \quad \text{and} \quad (df(V))(q)\neq0.
\]
\end{thm}

Ultimately, we make use of the Transformation Prescription to demonstrate
that the induced metric on the hypersurface of signature change is
either Riemannian or a positive semi-definite pseudo-metric. Specifically,
the induced metric is Riemannian if the radical is transverse to the
hypersurface, and the metric is a positive semi-definite pseudo-metric
if the radical is tangent to the hypersurface.

\begin{thm}
\label{par: Theorem Rad_q-1}If $q\in\mathcal{H}$ and $x\notin \textrm{Rad}_{q}$, 
then $\tilde{g}(x,x)>0$ holds for all $x\in T_{q}M$.
\end{thm}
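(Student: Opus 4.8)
The plan is to work directly at a point $q\in\mathcal{H}$ and exploit the explicit form $\tilde{g}=g+fV^{\flat}\otimes V^{\flat}$ furnished by the Transformation Theorem. Since $\mathcal{H}=f^{-1}(1)$, on $\mathcal{H}$ we have $f=1$, so for every $x\in T_{q}M$ the quadratic form reduces to $\tilde{g}(x,x)=g(x,x)+\bigl(g(V,x)\bigr)^{2}$, using $V^{\flat}(x)=g(V,x)$. First I would pin down the normalization of $V$: by the matrix determinant lemma one has $\det(\tilde{g})=\det(g)\,\bigl(1+f\,g(V,V)\bigr)$, and since $\tilde{g}$ is required to be degenerate precisely on $\mathcal{H}=f^{-1}(1)$, evaluating at $f=1$ forces $g(V,V)=-1$. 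In particular $V$ is $g$-timelike at $q$.

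Next I would identify the radical. A vector $w$ lies in $Rad_{q}$ exactly when $g(w,u)+g(V,w)\,g(V,u)=0$ for all $u\in T_{q}M$, i.e.\ $w^{\flat}=-g(V,w)\,V^{\flat}$; as $g$ is non-degenerate this says $w=-g(V,w)\,V$, so $w$ is a multiple of $V$, and conversely $\lambda V\in Rad_{q}$ using $g(V,V)=-1$. Hence $Rad_{q}=\mathbb{R}V$. Because $V$ is timelike, the $g$-orthogonal splitting $T_{q}M=\mathbb{R}V\oplus V^{\perp_{g}}$ holds with $g$ positive definite on the spacelike complement $V^{\perp_{g}}$.

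With these ingredients the computation is immediate. Writing $x=aV+y$ with $g(V,y)=0$, one gets $g(V,x)=-a$ and $g(x,x)=-a^{2}+g(y,y)$, whence $\tilde{g}(x,x)=\bigl(-a^{2}+g(y,y)\bigr)+a^{2}=g(y,y)$. Since $g$ is positive definite on $V^{\perp_{g}}$, we have $g(y,y)\geq0$ with equality if and only if $y=0$, i.e.\ if and only if $x\in\mathbb{R}V=Rad_{q}$. Therefore $x\notin Rad_{q}$ forces $y\neq0$ and gives $\tilde{g}(x,x)=g(y,y)>0$, as claimed; the same identity shows $\tilde{g}(x,x)=0$ exactly on the radical, so the induced form is positive semi-definite with kernel $Rad_{q}$. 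I expect the only genuinely delicate point to be the first step, namely justifying the normalization $g(V,V)=-1$ along $\mathcal{H}$ and hence the timelike character of $V$, since everything afterward is the orthogonal-decomposition identity $\tilde{g}(x,x)=g(y,y)$.
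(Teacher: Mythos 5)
Your proof is correct and takes essentially the same route as the paper's: decompose $x$ $g$-orthogonally along the radical direction, observe that $\tilde{g}(x,x)=g\bigl(v^{\perp}(x),v^{\perp}(x)\bigr)$, and conclude from the fact that $g$ is positive definite on the orthogonal complement of the timelike vector $V$, with $x\notin Rad_{q}=\mathrm{span}\{V\}$ forcing $v^{\perp}(x)\neq0$. The only cosmetic difference is that you derive the normalization $g(V,V)=-1$ on $\mathcal{H}$ via the matrix determinant lemma, whereas the paper simply imposes $g(V,V)=-1$ from the outset as part of the Transformation Prescription.
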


We conclude from this result, that dependent on whether $\textrm{Rad}_{q}\subset T_{q}\mathcal{H}$
or $\textrm{Rad}_{q}\subset\ker(df_{q})=T_{q}\mathcal{H}$, the induced metric
on the hypersurface of signature change $\mathcal{H}$ can be either
Riemannian (and non-degenerate) or a positive semi-definite pseudo-metric
with signature $(0,\underset{(n-2)\,\text{times}}{\underbrace{+,\ldots,+}})$.
The latter one is degenerate if $\ker(df_{q})=T_{q}\mathcal{H}=\textrm{Rad}_{q}$.

\section{Transformation Prescription\label{sec:Transformation-Prescription}}

Let $M$ be an $n$-dimensional, not necessarily time-orientable Lorentzian
manifold $(M,g)$ with a smooth Lorentzian metric $g$. In general,
the existence of a global non-vanishing, timelike $C^{\infty}$ vector
field is not guaranteed, it does exist under ideal circumstances.
However, we can alternatively depend on the following fact~\cite{Markus - Line-Element Fields and Lorentz Structures on Differentiable Manifolds}:

~

A line element field $\{V,\lyxmathsym{\textemdash}V\}$ over $M$
is like a vector field with undetermined sign (i.e. determined up
to a factor of $\pm1$) at each point $p$ of $M$; this is a \textit{smooth}
assignment to each $p\in M$ of a zero-dimensional sub-bundle of the
tangent bundle. Analogously, we can obtain a one-form field $(q,-q)$
up to sign, if we use $g$ to lower indicies. Then $w=q\otimes q$
is a well-defined $2$-covariant symmetric tensor~\cite{Lerner}.
For any smooth manifold $M$ the existence of a Lorentzian metric
is equivalent to the existence of a global, smooth non-vanishing line
element field~\cite{Hall,Markus - Line-Element Fields and Lorentz Structures on Differentiable Manifolds,Steenrod}
on $M$. The set of all non-vanishing line element fields $\{V,-V\}$
on $M$ is denoted by $\mathcal{L}(M)$. And henceforth, let $\mathfrak{F}(M)$
be the set of all smooth real-valued functions on $M$.
\begin{rem}
The existence of a global non-vanishing line element field is equivalent
to the existence of a one-dimensional distribution. Also, the latter
condition is equivalent to the manifold $M$ admitting a $C^{\infty}$
Lorentzian metric~\cite{Hall}.
\end{rem}

~

Before we prove Proposition~\ref{Proposition Transformation-Prescription-1}
below, let's recall that the Lorentzian metric $g$ induces the musical
isomorphisms between the tangent bundle $TM$ and the cotangent bundle
$T^{*}M$: Recall that \textit{flat} $\flat$ is the vector bundle
isomorphism $\flat\colon TM\rightarrow T^{*}M$ induced by the isomorphism
of the fibers $\flat\colon T_{p}M\rightarrow T_{p}^{*}M$, given by
$v\mapsto v^{\flat}$, where $\flat(v)(w)=v^{\flat}(w)=g(v,w)$ $\forall\,v,w\in T_{p}M$
for all $p\in M$.

~\\ We thus obtain for the vector $v=v^{j}e_{j}$ and the associated
$1$-form $v^{\flat}=v_{k}e^{k}$ the expression for the musical isomorphism
in local coordinates:~\\ $v^{\flat}(w)=v_{k}e^{k}(w^{i}e_{i})=v_{k}w^{i}\delta_{i}^{k}=v_{i}w^{i}=g(v^{j}e_{j},w^{i}e_{i})=g_{ij}v^{j}w^{i}$,
where $w=w^{i}e_{i}$ is an arbitrary vector. This calculation yields
$v_{i}=g_{ij}v^{j}\Longrightarrow v^{\flat}=v_{i}e^{i}=g_{ij}v^{j}e^{i}$.\textbf{~}\\{}

\begin{prop*}[Transformation Prescription]
\label{Proposition Transformation-Prescription}
Let $(M,g)$ be an $n$-dimensional (not necessarily time-orientable)
Lorentzian manifold. Then we obtain a signature-type changing metric
$\tilde{g}$ via the Transformation Prescription $\tilde{g}=g+fV^{\flat}\otimes V^{\flat}$,
where $f\colon M\longrightarrow\mathbb{R}$ is a smooth transformation
function and $V$ is one of the unordered pair $\{V,\lyxmathsym{\textemdash}V\}$
of a global smooth non-vanishing  line element field.
\end{prop*}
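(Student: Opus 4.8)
The plan is to show that $\tilde{g}=g+f\,V^{\flat}\otimes V^{\flat}$ is a smooth symmetric $(0,2)$-tensor whose signature is governed pointwise by a single scalar, and then that this scalar can change sign, forcing a signature change. First I would record that $\tilde{g}$ is well defined and smooth: $g$ is a smooth symmetric $(0,2)$-tensor, $f$ is smooth, and $V^{\flat}\otimes V^{\flat}$ is a smooth symmetric $(0,2)$-tensor, so their combination is again smooth and symmetric. Crucially, the sign ambiguity of the line element field is harmless, since $(-V)^{\flat}\otimes(-V)^{\flat}=V^{\flat}\otimes V^{\flat}$; thus $\tilde{g}$ depends only on the line field $\{V,-V\}$ and not on the chosen representative.

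Next I would normalize. Since $(M,g)$ is Lorentzian, it carries the timelike line element field whose existence is equivalent to that of $g$, so $g(V,V)<0$; replacing $V$ by $V/\sqrt{-g(V,V)}$ (a smooth, sign-ambiguous, hence well-defined line field) we may assume $g(V,V)=-1$. I would then diagonalize $\tilde{g}$ fibrewise. Fixing $p\in M$, complete $V_{p}$ to a $g$-orthonormal basis $(e_{0}=V_{p},e_{1},\dots,e_{n-1})$ with $g(e_{0},e_{0})=-1$ and $g(e_{i},e_{j})=\delta_{ij}$ for $i,j\ge 1$. In the dual coframe $V^{\flat}=g(V,\centerdot)=-e^{0}$, so $V^{\flat}\otimes V^{\flat}=e^{0}\otimes e^{0}$ and hence
\[
[\tilde{g}]_{p}=\operatorname{diag}\bigl(f(p)-1,\,1,\dots,1\bigr).
\]

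From this normal form the signature is immediate, and by Sylvester's law of inertia it is the signature of $\tilde{g}$ itself: exactly one eigenvalue, $f(p)-1$, can change sign, while the remaining $n-1$ eigenvalues stay positive. Therefore $\tilde{g}$ is Lorentzian where $f<1$, Riemannian (positive definite) where $f>1$, and degenerate precisely on $\mathcal{H}=f^{-1}(1)$, where it has signature $(0,+,\dots,+)$. Consequently, whenever $f$ attains values on both sides of $1$, the metric $\tilde{g}$ realizes a signature change in the sense of Definition~\ref{Definition. signature change}, with degeneracy locus $\mathcal{H}=f^{-1}(1)$. An invariant cross-check is the matrix determinant lemma, $\det[\tilde{g}]=\det[g]\,(1+f\,g(V,V))=\det[g]\,(1-f)$, which vanishes exactly on $\mathcal{H}$ and flips sign across it.

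I expect the only genuinely delicate point to be the choice and normalization of $V$: one must invoke that a Lorentzian $g$ admits a \emph{timelike} line element field and verify that $V/\sqrt{-g(V,V)}$ is a globally well-defined smooth line field, so that the critical locus is exactly $f^{-1}(1)$. Had $V$ been spacelike or null, the changing eigenvalue would not carry $\tilde{g}$ from the Lorentzian to the Riemannian regime. The remaining ingredients — smoothness, symmetry, and the pointwise diagonalization — are routine linear algebra performed in each fibre.
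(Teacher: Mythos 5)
Your proof is correct and takes essentially the same route as the paper: normalize $V$ to a unit timelike field with $g(V,V)=-1$ (the paper invokes the Hawking--Ellis construction of a timelike line element field here), complete it to a $g$-orthonormal frame, and read off $\tilde{g}$ in that frame as $\operatorname{diag}(f-1,1,\dots,1)$, so that the sign of $f-1$ dictates Lorentzian, degenerate, or Riemannian type. Your added observations (invariance under $V\mapsto -V$, Sylvester's law, and the determinant identity $\det[\tilde{g}]=(1-f)\det[g]$) are sound refinements but do not change the argument.
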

\begin{proof}
Since a smooth non-vanishing line element field exists for any Lorentzian
manifold and is defined as an assignment of a non-ordered pair of
equal and opposite vectors $\{V,-V\}$ at each point $p\in M$, the
existence of a smooth global non-vanishing line element field is always
ensured (even in the case of a non time-orientable Lorentzian manifold).
Hawking and Ellis~\cite{Hawking + Ellis - The large scale structure of spacetime},
p. 40, elucidate how to construct a smooth global non-vanishing \textit{timelike}
line element field from a smooth global non-vanishing line element
field. And since $V$ is a line element field that is non-zero at
every point in $M$, it can be normalized by $g_{\mu\nu}V^{\mu}V^{\nu}=-1$.\footnote{Here the term ``timelike'' refers to the Lorentzian metric $g$.}

\textbf{~}\\ Now consider an arbitrary $C^{\infty}$ function $f\colon M\rightarrow\mathbb{R}$,
and define tensor fields on $M$ of the form $\tilde{g}\coloneqq g+f(V^{\flat}\otimes V^{\flat})$,
where $\flat$ denotes the musical isomorphism. Locally there exist
vector fields $E_{i}$ that form an orthonormal frame, such that we
can express the metric in terms of coordinates:

\[
\tilde{g}_{\mu\nu}=g_{\mu\nu}+f(V_{\mu}V_{\nu})=g_{\mu\nu}+f(g_{\mu\alpha}V^{\alpha}E^{\mu}\otimes g_{\nu\beta}V^{\beta}E^{\nu})
\]

\[
=g_{\mu\nu}+f(g_{\mu\alpha}V^{\alpha}g_{\nu\beta}V^{\beta}(E^{\mu}\otimes E^{\nu}))=g_{\mu\nu}+f(g_{\mu\alpha}V^{\alpha}g_{\nu\beta}V^{\beta}).
\]
\textbf{~}\\ Moreover, locally there exist vector fields $E_{j}$
such that $\{V,E_{2},\ldots,E_{n}\}$ is a Lorentzian frame field
relative to $g$. Then

\[
\tilde{g}(E_{i},E_{j})=g(E_{i},E_{j})+f(V^{\flat}\otimes V^{\flat})(E_{i},E_{j})=\delta_{ij}+f(\underset{0}{\underbrace{g(V,E_{i})}}\cdot\underset{0}{\underbrace{g(V,E_{j})}})=\delta_{ij},
\]

\[
\tilde{g}(V,E_{j})=g(V,E_{j})+f(\underset{-1}{\underbrace{g(V,V)}}\cdot\underset{0}{\underbrace{g(V,E_{j})}})=0,
\]

\[
\tilde{g}(V,V)=g(V,V)+f(\underset{-1}{\underbrace{g(V,V)}}\cdot\underset{-1}{\underbrace{g(V,V)}})=f-1.
\]
Consequently, because of $0>\tilde{g}(V,V)=f-1\Leftrightarrow1>f$,
$\tilde{g}$ is a Lorentzian metric on $M$ in the region with $1>f(p)$.
Analogously, in the region with $1<f(p)$, we have that $\tilde{g}$
is a Riemannian metric, and for $f(p)=1$ the metric $\tilde{g}$
is degenerate. 
\end{proof}
~ 

If $1$ is a regular value of $f\colon M\rightarrow\mathbb{R}$, then
$\mathfrak{\mathcal{H}\coloneqq}f^{-1}(1)=\{p\in M\colon f(p)=1\}$
is a hypersurface in $M$. This is to say, $\mathcal{H}$ is a submanifold
of dimension $n-1$ in $M$, constituting the locus where the signature
change occurs. Moreover, for every $q\in\mathcal{H}$, the tangent
space $T_{q}\mathcal{H}$ is the kernel $\ker df_{q}=T_{q}(f^{-1}(1))$
of the map $df_{q}\colon T_{q}M\longrightarrow T_{1}\mathbb{R}$.
According to this, $(M,\tilde{g})$ represents a signature-type changing
manifold with the locus of signature change at $\mathcal{H}$.

~\\{}

\subsection{Representation of $\tilde{g}$\label{subsec:Representation-of (g,V,f)}}

The representation of $\tilde{g}$ as $\tilde{g}=g+fV^{\flat}\otimes V^{\flat}$,
as introduced in the \textit{Transformation Prescription} (Proposition~\ref{Proposition Transformation-Prescription-1})
above, is ambiguous. More precisely, different triples $(g,V,f)$
can yield the same metric $\tilde{g}$. To see this, notice that for
$(M,\tilde{g})$ with $\dim(M)=n$, the signature-type changing metric
$\tilde{g}$ and the Lorentzian metric $g$ are determined pointwise
by $\frac{n(n+1)}{2}$ metric coefficients. Aside from that, for the
former we have the $n$ components of $V$ and the value of $f$ (besides
$1$, because of $g(V,V)=-1$) at each point in $M$. 

\textbf{~}

In the following, we demonstrate that one can choose either $V$ or
$f$ arbitrarily, with the fixed condition that $f(q)=1$ $\forall q\in\mathcal{H}$
on the hypersurface, but not both independently of each other.
\begin{prop}
\label{prop:Choice of (V,g,f)}Given a signature-type changing metric
$\tilde{g}$, then in particular, the line element field $V$ can
be chosen arbitrarily (subject to the condition of being timelike
in $M_{L}$ with respect to $\tilde{g}$). This choice subsequently
allows for the determination of the Lorentzian metric $g$ and a $C^{\infty}$
function $f$, with the fixed condition $f(q)=1$ $\forall q\in\mathcal{H}$.
Then the triple $(g,V,f)$ constitutes the representation of $\tilde{g}$
through $\tilde{g}=g+fV^{\flat}\otimes V^{\flat}$, as introduced
in the Transformation Prescription (Proposition~\ref{Proposition Transformation-Prescription-1}).
\end{prop}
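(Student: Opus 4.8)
The plan is to invert the Transformation Prescription: with the line element field $V$ fixed (a representative of $\{V,-V\}$ that is timelike in $M_{L}$ with respect to $\tilde{g}$), I solve the identity $\tilde{g}=g+f\,V^{\flat}\otimes V^{\flat}$ for the pair $(g,f)$, where $\flat$ is understood with respect to the metric $g$ being constructed and $g$ carries the normalization $g(V,V)=-1$ built into the Prescription. First I would show that $(g,f)$ is \emph{forced}. Writing $h:=\tilde{g}(V,V)$ and $\theta:=\tilde{g}(V,\centerdot)$ and contracting the identity once with $V$ gives $\theta=(1-f)\,V^{\flat}$, and contracting a second time gives $h=f-1$. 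Hence necessarily
\[
f=h+1,\qquad V^{\flat}=-\tfrac{1}{h}\,\theta,\qquad g=\tilde{g}-(h+1)\,\tfrac{\theta\otimes\theta}{h^{2}}.
\]
This already yields the uniqueness part of the ``determination'': once $V$ is chosen, $f$ and $g$ have no remaining freedom.

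Next I would pin down the behaviour on $\mathcal{H}$. The key geometric fact is that \emph{any} $\tilde{g}$-timelike $V$ must enter the radical along the transition surface: by the transverse-radical structure (equivalently Theorem~\ref{par: Theorem Rad_q-1}), for $q\in\mathcal{H}$ one has $\tilde{g}(x,x)\ge0$ for every $x\in T_{q}M$, with equality exactly on $Rad_{q}$; since $\tilde{g}(V,V)<0$ throughout $M_{L}$, continuity forces $\tilde{g}(V_{q},V_{q})=0$ and hence $V_{q}\in Rad_{q}$ for every $q\in\mathcal{H}$. Consequently $h=\tilde{g}(V,V)$ vanishes on $\mathcal{H}$, so $f=h+1=1$ there, giving the required condition $f\equiv1$ on $\mathcal{H}$; moreover $\theta=\tilde{g}(V,\centerdot)$ vanishes identically on $\mathcal{H}$ because $V_{q}\in Rad_{q}$. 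Since $h<0$ on $M_{L}$ and $h>0$ on $M_{R}$ (where $\tilde{g}$ is Riemannian), the zero set $\{h=0\}$ is exactly $\mathcal{H}$, so the candidate $f=h+1$ satisfies $f^{-1}(1)=\mathcal{H}$ as needed.

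The main obstacle is smoothness of $g$ across $\mathcal{H}$: the closed expression above contains a factor $1/h^{2}$ that blows up on $\mathcal{H}$, so I must show the singularity cancels. The plan is a Hadamard-type division. Since each component $\theta_{\mu}=\tilde{g}_{\mu\nu}V^{\nu}$ is smooth and vanishes on $\mathcal{H}$, it suffices to show that $h$ vanishes to exactly first order, i.e. $dh\neq0$ on $\mathcal{H}$; then $\mathcal{H}=\{h=0\}$ is a regular level set and $\eta:=\theta/h$ extends to a smooth one-form with $\eta(V)=1$, whence
\[
g=\tilde{g}-(h+1)\,\eta\otimes\eta
\]
is manifestly smooth. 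To get $dh\neq0$ I would work in coordinates adapted to the transverse radical ($\mathcal{H}=\{x^{0}=0\}$ and $Rad_{q}=\mathrm{span}(\partial_{0})$ on $\mathcal{H}$, so $\tilde{g}_{0\mu}=0$ on $\mathcal{H}$) and compute $\partial_{0}h|_{\mathcal{H}}=(\partial_{0}\tilde{g}_{00})(V^{0})^{2}$, while cofactor expansion gives $\partial_{0}\det[\tilde{g}_{\mu\nu}]|_{\mathcal{H}}=\det[\tilde{g}_{ij}]_{i,j\ge1}\,\partial_{0}\tilde{g}_{00}$; the transverse type-change hypothesis $d(\det[\tilde{g}_{\mu\nu}])\neq0$ together with positive-definiteness of the induced block then forces $\partial_{0}\tilde{g}_{00}\neq0$, hence $dh\neq0$. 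This is the step I expect to be the most delicate, since it is exactly where the transversality assumption is indispensable.

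Finally I would verify that $g=\tilde{g}-(h+1)\eta\otimes\eta$ is Lorentzian and reproduces $\tilde{g}$. A short computation gives $g(V,\centerdot)=-\eta$, so $V^{\flat}=-\eta$, $g(V,V)=-1$, and $g+f\,V^{\flat}\otimes V^{\flat}=\tilde{g}$, confirming the representation. For the signature, off $\mathcal{H}$ I would split $T_{q}M=\mathrm{span}(V)\oplus V^{\perp_{\tilde{g}}}$: on $V^{\perp_{\tilde{g}}}$ one has $\eta=0$ and $\tilde{g}$ positive definite (timelike $V$ in $M_{L}$, Riemannian $\tilde{g}$ in $M_{R}$), while $g(V,V)=-1$, giving signature $(-,+,\dots,+)$. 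On $\mathcal{H}$, using a basis $\{V,e_{2},\dots,e_{n}\}$ with $e_{i}\in T_{q}\mathcal{H}$ and $\tilde{g}(V,\centerdot)=0$, a Schur-complement computation shows that $[g]$ is congruent to $\mathrm{diag}(-1)\oplus[\tilde{g}(e_{i},e_{j})]$ with the second block positive definite, so $g$ is Lorentzian there as well. This closes the construction and exhibits $(g,V,f)$ as the asserted representation of $\tilde{g}$.
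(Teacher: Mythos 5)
Your proposal is correct, and its algebraic core is the same as the paper's: both treat $V$ as given and invert $\tilde{g}=g+fV^{\flat}\otimes V^{\flat}$ under the normalization $g(V,V)=-1$, finding that $f-1=\tilde{g}(V,V)$ and that $g$ is then forced (the paper phrases this via a $\tilde{g}$-normalized field $\tilde{V}=\psi V$ and a $\tilde{g}$-orthonormal frame, so its $f=1-\frac{1}{\psi^{2}}$ is exactly your $f=1+h$). The genuine difference is what happens at $\mathcal{H}$. The paper's argument lives entirely on $M_{R}\cup M_{L}$, where $\psi$ is defined, and the condition $f=1$ on $\mathcal{H}$ is simply imposed; smoothness of $(g,f)$ across $\mathcal{H}$, and the Lorentzian character of $g$ on $\mathcal{H}$, are never verified there. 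You supply exactly this missing regularity analysis: the membership $V_{q}\in Rad_{q}$ (hence $h=0$, $\theta=0$, $f=1$ on $\mathcal{H}$) via semi-definiteness of $\tilde{g}$ on $\mathcal{H}$ and continuity from $M_{L}$; the nondegeneracy $dh\neq0$ on $\mathcal{H}$ from the transverse type-change hypothesis; a Hadamard division making $\eta=\theta/h$, and hence $g=\tilde{g}-(h+1)\,\eta\otimes\eta$, smooth; and a Schur-complement check that $g$ stays Lorentzian on $\mathcal{H}$. Two small points to tidy: (i) quoting Theorem~\ref{par: Theorem Rad_q-1} is not circular, since its proof uses only the existence of some representing triple and not this proposition, but you could avoid the forward reference altogether by observing that $\tilde{g}_{q}$ is a limit of positive definite forms from $M_{R}$, hence positive semi-definite with kernel $Rad_{q}$; (ii) in the $dh\neq0$ step, $d(\det[\tilde{g}_{\mu\nu}])\neq0$ forces the normal derivative $\partial_{0}\tilde{g}_{00}\neq0$ only after one notes that $\det[\tilde{g}_{\mu\nu}]$ and $\tilde{g}_{00}$ vanish identically along $\mathcal{H}$, so all their tangential derivatives vanish there. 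With these remarks added, your route proves a strictly more complete statement than the printed proof: the determined pair $(g,f)$ is smooth up to and across $\mathcal{H}$, $f^{-1}(1)=\mathcal{H}$, and $g$ is Lorentzian on all of $M$.
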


\begin{proof}
Let $\psi\colon M_{R}\cup M_{L}\longrightarrow\mathbb{R}$ be chosen
such that $\tilde{V}\coloneqq\psi V$ is normalized with respect to
$\tilde{g}$: that is, $\tilde{g}(\tilde{V},\tilde{V})=1$ and $\tilde{g}(\tilde{V},\tilde{V})=-1$
in $M_{R}$ and $M_{L}$, respectively.~\\ In the following, only
the relationships in $M_{L}$ will be examined, the argumentation
in the Riemannian sector $M_{R}$ is carried out analogously with
corresponding changes in sign. Consider the normalized line element
field $\tilde{V}$, with ~\\ 
\[
-1=\tilde{g}(\tilde{V},\tilde{V})=g(\tilde{V},\tilde{V})+f[g(V,\tilde{V})]^{2}
\]

\[
=\psi^{2}g(V,V)+f\psi^{2}[g(V,V)]^{2}=\psi^{2}\cdot(-1+f)
\]

\begin{equation}
\Longrightarrow f-1=-\frac{1}{\psi^{2}}.\label{eq:determined f}
\end{equation}
Then, we extend $\tilde{V}$ to a Lorentzian basis $\{\tilde{V},\tilde{E}_{1},\ldots,\tilde{E}_{n-1}\}$
relative to $\tilde{g}$, and we get

\[
0=\tilde{g}(\tilde{V},\tilde{E}_{i})=g(\tilde{V},\tilde{E}_{i})+fg(V,\tilde{V})g(V,\tilde{E}_{i})
\]

\[
=\psi[g(V,\tilde{E}_{i})+fg(V,V)g(V,\tilde{E}_{i})]=\psi\cdot(1-f)g(V,\tilde{E}_{i})
\]

\begin{equation}
\Longrightarrow g(V,\tilde{E}_{i})=0,\label{eq: determinet g(V,E)}
\end{equation}
because of $\psi\cdot(1-f)\neq0$ on $M_{R}\cup M_{L}$. Furthermore, 

\[
\delta_{ij}=\tilde{g}(\tilde{E}_{i},\tilde{E}_{j})=g(\tilde{E}_{i},\tilde{E}_{j})+f\underset{0}{\underbrace{g(V,\tilde{E}_{i})}}\underset{0}{\underbrace{g(V,\tilde{E}_{j})}}
\]

\begin{equation}
\Longrightarrow g(\tilde{E}_{i},\tilde{E}_{j})=\delta_{ij}.\label{eq:Determined g(E,E)}
\end{equation}
Since $\{V,\tilde{E}_{1},\ldots,\tilde{E}_{n-1}\}$ is also a basis,
and in addition $g(V,V)=-1$ holds, the metric $g$ is uniquely determined.
Moreover, for the function $\psi\colon M_{R}\cup M_{L}\longrightarrow\mathbb{R}$
we have established the relation $\tilde{g}(V,V)=\frac{1}{\psi^{2}}\tilde{g}(\tilde{V},\tilde{V})=-\frac{1}{\psi^{2}}$,
and therefore, the $C^{\infty}$ function $f$, based on Equation~\ref{eq:determined f},
is also uniquely determined by $f=1-\frac{1}{\psi^{2}}$.~\\ Note
that a change in length defined by $\phi\colon M_{R}\cup M_{L}\longrightarrow\mathbb{R}$,
$V\mapsto\phi V$, entails the change $f\mapsto1+\phi^{2}\cdot(f-1)$. 
\end{proof}
In light of Proposition~\ref{prop:Choice of (V,g,f)}, we understand
that, due to arbitrary rescaling, there are no distinguished values
for $f$. Moreover, according to the above proposition, the triples
$(g,V,f)$ form equivalence classes, where all triples within an equivalence
class yield the same metric $\tilde{g}$. If one perturbs a triple,
especially at $V$ but not at $g$ and $f$, the new triple belongs
to a different equivalence class and thus yields a different $\tilde{g}$.
However, within the new equivalence class, there is also a triple
with the original $V$ that similarly yields the ``new'' $\tilde{g}$. 
On the other hand, one can \textquoteleft simultaneously\textquoteright{}
perturb $V$, $g$, and $f$ in such a way that the original equivalence
class is maintained, and hence, the original $\tilde{g}$ is preserved.
This insight suggests the following proposition.

~
\begin{prop}
\label{prop:Equivalence-classes-(V,g,f)}Let $X$ be the set of all
triples $(g,V,f)$ with $g\in Lor(M)$, $f\in\mathfrak{F}(M)$, $V\in\mathcal{L}(M)$,
where $Lor(M)$ denotes the set of all Lorentzian metrics on $M$,
$\mathfrak{F}(M)$ the set of all smooth real-valued functions on
$M$, and $\mathcal{L}(M)$ the set of all non-vanishing line element
fields $\{V,-V\}$ on $M$. The equivalence relation $\sim$ on $X$
is defined by: $(g,V,f)\sim(\bar{g},\bar{V},\bar{f})$ if and only
if $\tilde{g}=g+fV^{\flat}\otimes V^{\flat}=\bar{g}+\bar{f}\bar{V}^{\flat}\otimes\bar{V}^{\flat}$.
Then the partition $X$ of the set of all triples $(g,V,f)$ is given
by $X/\sim\,=\{[\tilde{g}]_{\sim}=[(g,V,f)]_{\tilde{g}}\colon(g,V,f)\in X\}$,
where $\tilde{g}$ is to be interpreted in such a way that it can
be regarded as a representative of the equivalence class of triples
corresponding to $\tilde{g}$.
\end{prop}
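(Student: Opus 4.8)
The plan is to observe that $\sim$ is exactly the equivalence relation induced on $X$ by a single well-defined map, so that its classes are the fibers of that map and automatically partition $X$. First I would introduce the map
\[
\Phi\colon X\longrightarrow\mathcal{S},\qquad \Phi(g,V,f)=g+fV^{\flat}\otimes V^{\flat},
\]
where $\mathcal{S}$ denotes the set of smooth symmetric $(0,2)$-tensor fields on $M$ and $\flat$ is the musical isomorphism taken with respect to $g$. The key point to record is that, although $\flat$ depends on $g$, the right-hand side is a genuine single-valued function of the whole triple $(g,V,f)$; hence $\Phi$ is well defined on all of $X$, and by construction $(g,V,f)\sim(\bar g,\bar V,\bar f)$ holds if and only if $\Phi(g,V,f)=\Phi(\bar g,\bar V,\bar f)$.

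Next I would verify the three equivalence-relation axioms, each of which reduces immediately to the corresponding property of equality of tensor fields. Reflexivity follows from $\Phi(g,V,f)=\Phi(g,V,f)$; symmetry from the symmetry of equality; and transitivity from its transitivity, since $\Phi(g,V,f)=\Phi(\bar g,\bar V,\bar f)$ together with $\Phi(\bar g,\bar V,\bar f)=\Phi(\hat g,\hat V,\hat f)$ forces $\Phi(g,V,f)=\Phi(\hat g,\hat V,\hat f)$. Thus $\sim$ is an equivalence relation on $X$.

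Then I would identify the classes with the nonempty fibers of $\Phi$. For each $\tilde g$ in the image $\Phi(X)$, set $[\tilde g]_{\sim}:=\Phi^{-1}(\tilde g)=\{(g,V,f)\in X:\ g+fV^{\flat}\otimes V^{\flat}=\tilde g\}$; by the characterization above this is precisely one equivalence class. These fibers are nonempty by definition of the image, pairwise disjoint since $\Phi$ is single-valued (no triple maps to two distinct tensors), and they exhaust $X$ because each triple lies in the fiber over its own image. This yields the partition $X=\bigsqcup_{\tilde g\in\Phi(X)}\Phi^{-1}(\tilde g)$, equivalently $X/\!\sim\,=\{[\tilde g]_{\sim}:\tilde g\in\Phi(X)\}$, and the assignment $\tilde g\mapsto[\tilde g]_{\sim}$ is a bijection of $\Phi(X)$ onto $X/\!\sim$; this is what licenses writing $[\tilde g]_{\sim}=[(g,V,f)]_{\tilde g}$, with $\tilde g$ serving as the representative label of its class.

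The hard part will not be any estimate but rather making precise that labeling classes by $\tilde g$ is a bijection and not merely a surjection, i.e. that $\tilde g\neq\tilde g'$ forces $[\tilde g]_{\sim}\neq[\tilde g']_{\sim}$; this is automatic once the classes are recognized as fibers of a function. To exhibit the internal structure of a single class it would be natural to invoke Proposition~\ref{prop:Choice of (V,g,f)}: fixing an admissible $V$ determines $g$ and $f$ uniquely within $[\tilde g]_{\sim}$, so each class is swept out by the rescalings $V\mapsto\phi V$ accompanied by the induced change $f\mapsto 1+\phi^{2}(f-1)$, which confirms that every class is genuinely a single $\sim$-block and that no two distinct metrics share a block.
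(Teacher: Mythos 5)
Your proposal is correct and takes essentially the same approach as the paper: both treat $\sim$ as the relation ``equality of the image tensor $\tilde{g}=g+fV^{\flat}\otimes V^{\flat}$,'' verify the equivalence axioms directly from properties of equality, and identify each class with the set of all triples producing the same $\tilde{g}$, which then automatically partitions $X$ and is labeled bijectively by the metrics $\tilde{g}$. Your explicit map $\Phi$ and the fiber language merely formalize what the paper's proof does implicitly (the paper even states your final bijection as its subsequent corollary), so the two arguments coincide in substance.
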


\begin{proof}
First recall that a relation $\sim$ on  $X$ is called an equivalence
relation if it is reflexive, symmetric, and transitive. Moreover,
a partition of the set $X$ is then defined as a collection of all
disjoint non-empty subsets $X_{i}$ of $X$, where $i\in I$ ($I$
is the index set), such that

~

$X_{i}\neq0$ $\forall i\in I$, 

$X_{i}\cap X_{j}=\emptyset$, when $i\neq j$, 

$\bigcup_{i\in I}X_{i}=X$.

\textbf{~}\\ It is straightforward to demonstrate that the relation
$\sim$ satisfies all three conditions for an equivalence relation.
Specifically, two triples $(g,V,f)\sim(\bar{g},\bar{V},\bar{f})$
are equivalent if and only if $\tilde{g}=g+fV^{\flat}\otimes V^{\flat}=\bar{g}+\bar{f}\bar{V}^{\flat}\otimes\bar{V}^{\flat}$.
Given the relation $\sim$, we can define the equivalence class $[\tilde{g}]_{\sim}=[(g,V,f)]_{\tilde{g}}=\{(g,V,f)\in Lor(M)\times\mathcal{L}(M)\times\mathfrak{F}(M)\colon\tilde{g}=g+fV^{\flat}\otimes V^{\flat}\}$,
where $\tilde{g}$ can be viewed as a class representative of the
equivalence class of triples corresponding to $\tilde{g}$. And we
can establish $X/\sim \ =\{[\tilde{g}]_{\sim}=[(g,V,f)]_{\tilde{g}}\colon(g,V,f)\in X\}$
$X/\sim\,=\{[\tilde{g}]_{\sim}\,=[(g,V,f)]_{\tilde{g}}\colon(g,V,f)\in X\}$,
which is a pairwise disjoint partition of $X$. Note that the set
of class representatives $\tilde{g}$ is a subset of $X$ which contains
exactly one element from each equivalence class $[\tilde{g}]_{\sim}=[(g,V,f)]_{\tilde{g}}$,
this is the set of all signature-type changing metrics $\tilde{g}$
on $M$. 
\end{proof}
\begin{cor}
There is a bijection between the partition of the set of all triples
$(g,V,f)$ and the set of all signature-type changing metrics $\tilde{g}$
on $M$.
\end{cor}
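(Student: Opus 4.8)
The plan is to realize the claimed bijection as the map induced on the quotient by the evaluation that sends a triple to the metric it produces. First I would introduce the evaluation map
\[
\Psi\colon X \longrightarrow \mathcal{T}^{0}_{2}(M), \qquad \Psi(g,V,f) := g + fV^{\flat}\otimes V^{\flat},
\]
where $\mathcal{T}^{0}_{2}(M)$ denotes the symmetric $(0,2)$-tensor fields on $M$ and the flat $\flat$ is taken with respect to $g$. By Proposition~\ref{Proposition Transformation-Prescription-1}, the image of $\Psi$ consists of signature-type changing metrics; and by the discussion accompanying Proposition~\ref{prop:Equivalence-classes-(V,g,f)}, the set $S(M)$ of all signature-type changing metrics $\tilde{g}$ on $M$ is precisely this image, i.e.\ the set of class representatives singled out there.

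The crucial observation is that the equivalence relation $\sim$ is nothing other than the fiber relation of $\Psi$: by its very definition, $(g,V,f)\sim(\bar{g},\bar{V},\bar{f})$ holds if and only if $\Psi(g,V,f)=\Psi(\bar{g},\bar{V},\bar{f})$. Consequently I would define
\[
\Phi\colon X/\!\sim\, \longrightarrow S(M), \qquad \Phi\bigl([(g,V,f)]_{\sim}\bigr) := g + fV^{\flat}\otimes V^{\flat},
\]
and check the three standard properties. Well-definedness is immediate, since if two triples are $\sim$-equivalent they yield the same tensor by the very definition of $\sim$, so $\Phi$ does not depend on the chosen representative. Injectivity is the converse implication: $\Phi([(g,V,f)]_{\sim})=\Phi([(\bar{g},\bar{V},\bar{f})]_{\sim})$ asserts that the two metrics coincide, which is exactly the defining condition for $(g,V,f)\sim(\bar{g},\bar{V},\bar{f})$, forcing the two classes to agree. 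Surjectivity then follows from the identification of $S(M)$ with the image of $\Psi$: every $\tilde{g}\in S(M)$ is by construction of the form $g+fV^{\flat}\otimes V^{\flat}$ for some $(g,V,f)\in X$, hence $\tilde{g}=\Phi([(g,V,f)]_{\sim})$. Proposition~\ref{prop:Choice of (V,g,f)} even makes this concrete by producing such a triple for any admissible prescribed $V$. Assembling these three facts yields that $\Phi$ is the desired bijection between the partition $X/\!\sim$ and the set of all signature-type changing metrics on $M$.

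The main obstacle is conceptual rather than computational, and lies in pinning down the target set $S(M)$. If one takes $S(M)$ to be the abstract collection of all transverse signature-type changing metrics with transverse radical, then surjectivity is not automatic and must be secured by invoking the (Local or Global) Transformation Theorem~\ref{Transformation-Theorem-(local)-1}, which guarantees that each such metric genuinely arises from the Transformation Prescription. If instead, as in Proposition~\ref{prop:Equivalence-classes-(V,g,f)}, $S(M)$ is defined as the set of class representatives, i.e.\ the image of $\Psi$, then surjectivity is tautological and the statement collapses to the elementary fact that any map induces a bijection from the quotient by its fiber relation onto its image. I would fix this interpretation explicitly at the outset, so that the remainder of the argument reduces to the routine verification of well-definedness and injectivity.
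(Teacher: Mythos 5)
Your proposal is correct and takes essentially the same route as the paper: the corollary is immediate from Proposition~\ref{prop:Equivalence-classes-(V,g,f)}, whose proof already observes that $\sim$ is exactly the fiber relation of the evaluation $(g,V,f)\mapsto g+fV^{\flat}\otimes V^{\flat}$, so that the quotient $X/\!\sim$ bijects onto its image, which the paper identifies with the set of all signature-type changing metrics $\tilde{g}$ on $M$. Your closing caveat about fixing the interpretation of the target set (image of the evaluation map, not the abstract class of all transverse type-changing metrics) is precisely the reading the paper adopts, since the corollary appears before, and independently of, the Transformation Theorem.
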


\section{Transformation theorem}

Note that in the \textit{Transformation Prescription} (Proposition~\textit{\ref{Proposition Transformation-Prescription-1}})
the locus of signature-change is not necessarily an embedded hypersurface
in $M$. Recall that this is only the case if $1$ is a regular value
of $f\colon M\longrightarrow\mathbb{R}$, and then $\mathfrak{\mathcal{H}\coloneqq}f^{-1}(1)$
is a smoothly embedded hypersurface in $M$. Also, the signature-type
changing manifold $(M,\tilde{g})$ has a spacelike hypersurface if
and only if the radical $\textrm{Rad}_{q}$ intersects $T_{q}\mathcal{H}$
transversally for all $q\in\mathcal{H}$.

\subsection{Local Transformation Theorem}

Our main result, Theorem~\ref{Transformation-Theorem-(local)-1},
can now be proved:

\begin{thm*}[Local Transformation Theorem, transverse radical]
\label{Transformation-Theorem-(local)}
For every $q\in M$ there exists a neighborhood $U(q)$, such that
the metric $\tilde{g}$ associated with a signature-type changing
manifold $(M,\tilde{g})$ is a transverse, type-changing metric with
a transverse radical if and only if $\tilde{g}$ is locally obtained
from a Lorentzian metric $g$ via the Transformation Prescription
$\tilde{g}=g+fV^{\flat}\otimes V^{\flat}$, where, for all $q\in \mathcal H \coloneqq f^{-1}(1)=\{p\in M\colon f(p)=1\}$,
\[
df(q)\neq0 \quad \text{and} \quad (df(V))(q)\neq0.
\]\end{thm*}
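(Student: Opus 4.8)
The plan is to prove the two directions separately, treating the ``if'' direction as the easier computational verification and the ``only if'' direction as the substantive construction. For the \emph{if} direction, I would assume $\tilde g = g + fV^\flat\otimes V^\flat$ with the stated hypotheses and verify that $(M,\tilde g)$ is a transverse type-changing metric with transverse radical. The computation from the Transformation Prescription already shows that $\tilde g(V,V) = f-1$, so $\mathcal H = f^{-1}(1)$ is precisely the degeneracy locus; since $df\neq 0$ on $\mathcal H$, the value $1$ is a regular value and $\mathcal H$ is a smoothly embedded hypersurface. To check the \emph{transverse type-changing} condition I would compute $\det([\tilde g_{\mu\nu}])$ in a local frame adapted to $\{V,E_2,\dots,E_n\}$; in such a frame $\tilde g$ is diagonal with entries $(f-1,1,\dots,1)$, so $\det([\tilde g_{\mu\nu}]) = (f-1)\cdot(\text{something nonzero})$ up to the frame-change Jacobian, whence $d(\det[\tilde g_{\mu\nu}]) \neq 0$ on $\mathcal H$ follows from $df\neq 0$. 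Finally I would identify $\mathrm{Rad}_q = \mathbb R\cdot V_q$ at each $q\in\mathcal H$ (since $\tilde g(V,\cdot)$ vanishes there) and show transversality: $V_q\notin T_q\mathcal H = \ker df_q$ is exactly the condition $(df)(V)(q)\neq 0$, which is assumed.

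For the \emph{only if} direction, I would start from a transverse type-changing metric $\tilde g$ with transverse radical and construct, on a suitable neighborhood $U(q)$, the triple $(g,V,f)$. The natural candidate for $V$ is a vector field spanning the radical $\mathrm{Rad}_q$ along $\mathcal H$, extended smoothly (and chosen $\tilde g$-timelike) into the Lorentzian region $M_L$; this extension exists locally because the transverse radical varies smoothly and spans a one-dimensional distribution. The function $f$ should be built so that $\mathcal H = f^{-1}(1)$ and $f-1$ measures the degeneracy, i.e.\ via the relation $f = 1 - 1/\psi^2$ from Proposition~\ref{prop:Choice of (V,g,f)}, where $\psi$ is the $\tilde g$-normalizing factor for $V$. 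The Lorentzian metric $g$ is then recovered by the inverse of the prescription, $g = \tilde g - f V^\flat\otimes V^\flat$; I would verify $g$ is genuinely Lorentzian (nondegenerate of the right signature) on all of $U(q)$, using the computations of Proposition~\ref{prop:Choice of (V,g,f)} which already determine $g(V,V)=-1$, $g(V,\tilde E_i)=0$, and $g(\tilde E_i,\tilde E_j)=\delta_{ij}$.

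The main obstacle is the behavior at the hypersurface $\mathcal H$ itself, where $\psi\to\infty$ (since $f\to 1$ forces $1/\psi^2\to 0$), so the normalization argument of Proposition~\ref{prop:Choice of (V,g,f)} degenerates and cannot be applied naively on $\mathcal H$. I would handle this by constructing $V$, $g$, and $f$ on $\mathcal H$ directly from the radical data and then checking smoothness across $\mathcal H$ rather than obtaining them as limits from $M_L\cup M_R$. Concretely, the transverse-radical hypothesis guarantees $V$ extends smoothly through $\mathcal H$ as a nonvanishing field, and I would show $f$ extends smoothly with $df_q\neq 0$ precisely because the transversality $V_q\notin T_q\mathcal H$ makes $V(f)(q)\neq 0$ — this is where the transversality of the radical is used in an essential, non-removable way. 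The remaining technical point is verifying that the recovered $g$ stays Lorentzian up to and across $\mathcal H$ (the degeneracy of $\tilde g$ must be entirely absorbed into the $fV^\flat\otimes V^\flat$ term), which I expect to follow from expressing both metrics in the adapted frame $\{V,\tilde E_1,\dots,\tilde E_{n-1}\}$ and tracking that the single degenerate direction of $\tilde g$ coincides with $V$.
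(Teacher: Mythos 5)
Your overall architecture is sound, and on the converse direction it is genuinely different from the paper's. The paper does not split the equivalence into two separate constructions: it invokes Propositions~\ref{prop:Choice of (V,g,f)} and~\ref{prop:Equivalence-classes-(V,g,f)} to assume from the outset that a representation $\tilde{g}=g+fV^{\flat}\otimes V^{\flat}$ with $g(V,V)=-1$ is given, passes to co-moving coordinates, and proves a single biconditional: from the factorization $\det([\tilde{g}_{\mu\nu}])=(1-f)\det(G_{\mu\nu})$ with $\det(G_{\mu\nu})\neq0$ on $\mathcal{H}$ it gets $d(\det([\tilde{g}_{\mu\nu}]))\neq0\Leftrightarrow df\neq0$ on $\mathcal{H}$, and then transversality of the radical is exactly $(df)(V)\neq0$ because $Rad_{q}=\mathrm{span}(V_{q})$ and $T_{q}\mathcal{H}=\ker df_{q}$. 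Your ``if'' direction is this same determinant computation carried out in a $g$-orthonormal frame rather than coordinates (legitimate, granted your Jacobian-squared remark, since the determinant vanishes on $\mathcal{H}$). Your ``only if'' direction instead constructs the triple $(g,V,f)$ from the radical data, and here your proposal is in one respect more careful than the paper: Proposition~\ref{prop:Choice of (V,g,f)} only determines $(g,f)$ on $M_{R}\cup M_{L}$, and you correctly identify both the $\psi\to\infty$ degeneration at $\mathcal{H}$ and the fact that any admissible $V$ is forced to span the radical along $\mathcal{H}$, since $\tilde{g}(V,\cdot)=(1-f)\,g(V,\cdot)$ vanishes where $f=1$.

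There is, however, one genuine logical error in your converse sketch, plus a formula-level slip. The error: you assert that $df_{q}\neq0$, equivalently $(V(f))(q)\neq0$, follows from the transversality $V_{q}\notin T_{q}\mathcal{H}$ of the radical. It does not: the implication ``$V_{q}\notin T_{q}\mathcal{H}\Rightarrow df_{q}(V)\neq0$'' presupposes $T_{q}\mathcal{H}=\ker df_{q}$, which already requires $df_{q}\neq0$. The metric $\tilde{g}=t^{3}(dt)^{2}+\sum_{i}(dx^{i})^{2}$ is a counterexample: it changes signature across $\{t=0\}$ and its radical $\mathrm{span}(\partial_{t})$ is transverse, yet it is not transverse type-changing ($\det=t^{3}$ has a degenerate zero), so by your own ``if'' direction no representation of it can have $df\neq0$ on $\mathcal{H}$; indeed $V=\partial_{t}$ gives $f=1+t^{3}$ with $df|_{t=0}=0$. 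So in the converse direction $df\neq0$ must be extracted from the transverse type-changing hypothesis (the simple zero of the determinant), and radical transversality only upgrades this to $(df)(V)\neq0$. The slip: $g=\tilde{g}-fV^{\flat}\otimes V^{\flat}$ is circular as a recovery formula, since $\flat$ is taken with respect to the unknown $g$; you need $g(V,\cdot)=\tfrac{1}{1-f}\tilde{g}(V,\cdot)$. Both points are repaired by the computation your plan stops short of: take flow coordinates with $V=\partial_{t}$ and $\mathcal{H}=\{t=0\}$ (possible precisely by transversality of the radical); since $\tilde{g}(V,\cdot)$ vanishes on $\mathcal{H}$, Hadamard's lemma gives $\tilde{g}_{0\mu}=t\,a_{\mu}$, and $d(\det([\tilde{g}_{\mu\nu}]))|_{t=0}=a_{0}\det([\tilde{g}_{ij}])\,dt$, so transverse type-changing forces $a_{0}\neq0$ and $\det([\tilde{g}_{ij}])\neq0$. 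Then $f\coloneqq1+ta_{0}$ satisfies $df\neq0$ and $(df)(V)=a_{0}\neq0$ on $\mathcal{H}$, and $g\coloneqq\tilde{g}-\frac{1+ta_{0}}{a_{0}^{2}}\,a\otimes a$ (with $a=a_{\mu}dx^{\mu}$) is smooth across $\mathcal{H}$, satisfies $g(V,V)=-1$ and $\tilde{g}=g+fV^{\flat}\otimes V^{\flat}$, and is Lorentzian near $\mathcal{H}$ because $\tilde{g}$ restricted to $\ker a$ is positive definite there. With these repairs your route closes and is, on the converse direction, more self-contained than the paper's.
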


\begin{proof}
In the subsequent proof, we only consider the scenario where $q\in\mathcal{H}$.
If $q$ is within the Lorentzian sector $M_{L}$, then $U(q)$ can
be chosen to be sufficiently small, ensuring that $U(q)$ is entirely
contained within $M_{L}$. Consequently, in this scenario, the theorem's
assertion becomes trivial, as $\tilde{g}$ already represents a Lorentzian
metric there, and thus, $f=0$ satisfies all the stated conditions.
Similarly, we can select a neighborhood in the Riemannian sector $M_{R}$
where, for instance, $f=2$ is trivially applicable.

\textbf{~}\\ To begin, consider that, according to Proposition~\ref{prop:Choice of (V,g,f)}
and Proposition~\ref{prop:Equivalence-classes-(V,g,f)} any triple
$(g,V,f)$, where $g\in Lor(M)$, $V\in\mathcal{L}(M)$ with $g(V,V)=-1$
and $f\in\mathfrak{F}(M)$, yields a signature-type changing metric
$\tilde{g}=g+fV^{\flat}\otimes V^{\flat}=g+fg(V,\centerdot)g(V,\centerdot)$,
which is defined over the entire manifold $M$ (see\textit{ }Proposition~\ref{Proposition Transformation-Prescription-1}).
Conversely, if we have a signature-type changing metric $\tilde{g}$
we can always single out the associated triple $(g,V,f)$ belonging
to the equivalence class of $[\tilde{g}]$, such that $\tilde{g}$
locally takes the form $\tilde{g}=g+fV^{\flat}\otimes V^{\flat}$.
In either case, we can initiate the proof by assuming that locally
$\tilde{g}=g+fV^{\flat}\otimes V^{\flat}$ is given, and normalized
with $g(V,V)=-1$.

\textbf{~}\\ In order to simplify the problem as much as possible
we adopt co-moving coordinates (refer to~\cite{Fabbri,Rindler,Schutz}
for more details), that is $g_{00}(V^{0})^{2}=g(V,V)=-1$ and $V^{i}=0$
for $i\neq0$. Then the metric $\tilde{g}$ relative to these coordinates
is given by\textbf{ }

\begin{equation}
\tilde{g}_{\mu\nu}=g_{\mu\nu}+fg_{\mu\alpha}V^{\alpha}g_{\nu\beta}V^{\beta}=g_{\mu\nu}+fg_{\mu0}g_{\nu0}(V^{0})^{2}=g_{\mu\nu}-f\frac{g_{\mu0}g_{\nu0}}{g_{00}}.\label{eq: g-tilde in comoving coordinates}
\end{equation}

\textbf{~}\\ The last equality follows from the aforementioned condition
\[
g_{00}(V^{0})^{2}=g(V,V)=-1\Longleftrightarrow(V^{0})^{2}=-\frac{1}{g_{00}}.
\]
\textbf{~}\\ According to Equation~\ref{eq: g-tilde in comoving coordinates}
the components of the metric in co-moving coordinates are determined
by

~

$\tilde{g}_{00}=g_{00}-f\frac{(g_{00})^{2}}{g_{00}}=(1-f)g_{00}$,

$\tilde{g}_{ij}=g_{ij}-f\frac{g_{i0}g_{j0}}{g_{00}}$,

$\tilde{g}_{0i}=g_{0i}-fg_{i0}=(1-f)g_{0i}$.

\textbf{~}\\ And the associated matrix representation of $\tilde{g}$
is given by 

\textbf{~}\\ $[\tilde{g}_{\mu\nu}]=\left (\begin{array}{@{}c|ccc@{}}      (1-f)g_{00} & (1-f)g_{01} & \cdots & (1-f)g_{0n-1}   \\\hline (1-f)g_{10} & \tilde{g}_{11} & \cdots & \tilde{g}_{1n-1}  \\     \vdots & \vdots &  &\vdots  \\ (1-f)g_{n-10} & \tilde{g}_{n-11} & \cdots &\tilde{g}_{n-1n-1}     \end{array}\right)$ \textbf{~}

\textbf{~}\\{}

\ \ $= \left (\begin{array}{@{}c|ccc@{}}      (1-f)g_{00} & (1-f)g_{01} & \cdots & (1-f)g_{0n-1}  \\\hline     (1-f)g_{10} &  &  &   \\     \vdots &  & \tilde{G} &  \\     (1-f)g_{n-10} &  &  &      \end{array}\right).$ 

\textbf{~}

\textbf{~}\\ Then take the determinant $\det([\tilde{g}_{\mu\nu}])=(1-f)\det(G_{\mu\nu})$,
where 

\textbf{~}\\  $G_{\mu\nu}= \left (\begin{array}{@{}c|ccc@{}}      g_{00} & g_{01} & \cdots & g_{0n-1}  \\\hline     (1-f)g_{10} &  &  &   \\     \vdots &  & \tilde{G} &  \\     (1-f)g_{n-10} &  &  &      \end{array}\right)$.

\textbf{~}

\textbf{~}\\ Now consider $d(\det([\tilde{g}_{\mu\nu}]))=d[(1-f)\det(G_{\mu\nu})]=(1-f)d(\det(G_{\mu\nu}))-df\det(G_{\mu\nu})$.
Since $q\in\mathfrak{\mathcal{H}\coloneqq}f^{-1}(1)=\{p\in M\colon f(p)=1\}$
is a regular point for $f$, the term $1-f=0$ is zero on the hypersurface
$\mathcal{H}$. Hence, on $\mathcal{H}$ we are left with $d(\det([\tilde{g}_{\mu\nu}])_{q})=-df\cdot\det(G_{\mu\nu})$.
On $\mathcal{H}$ we have $f\equiv1$, and therefore it remains to
show that on $\mathcal{H}$ the following is true:

\textbf{~}\\ $0\neq\det(G_{\mu\nu})=\det\left(\begin{array}{cccc}
g_{00} & g_{01} & \cdots & g_{0n-1}\\
0 & g_{11}-\frac{g_{01}g_{01}}{g_{00}} & \cdots & g_{1n-1}-\frac{g_{01}g_{0n-1}}{g_{00}}\\
\vdots & \vdots & \ddots & \vdots\\
0 & g_{n-11}-\frac{g_{0n-1}g_{01}}{g_{00}} & \cdots & g_{n-1n-1}-\frac{g_{0n-1}g_{0n-1}}{g_{00}}
\end{array}\right)$.

\textbf{~}

\textbf{~}\\ Notice that \(\det(G_{\mu\nu})\) is the determinant of a block matrix with the block \([g_{00}] \neq 0\) being invertible on \(\mathcal{H}\) (note that \(g_{00}\) is also non-zero outside of \(\mathcal{H}\), simply as a consequence of \(g_{00}(V^{0})^{2} = g(V, V) = -1\), while, of course, \(\tilde{g}_{00}\) is zero exactly on \(\mathcal{H}\)), hence

\[
\det(G_{\mu\nu})=\det([g_{00}])\cdot\det([g_{ij}-\frac{g_{i0}g_{j0}}{g_{00}}])=g_{00}\cdot\det(\tilde{G}).
\]
Because of the condition $g_{00}(V^{0})^{2}=-1$ we have 
\[
g_{ij}-\frac{g_{i0}g_{j0}}{g_{00}}=g_{ij}+g_{0i}g_{0j}(V^{0})^{2}=:h_{ij},
\]
\textbf{~}\\ which are the metric coefficients of the degenerate
metric $h=g+fV^{\flat}\otimes V^{\flat}$, i.e. the coefficients of
$[\tilde{g}_{\mu\nu}]$ for $f=1$. More precisely, $h_{\mu\nu}=g_{\mu\nu}+g_{\mu\alpha}V^{\alpha}g_{\nu\beta}V^{\beta}=g_{\mu\nu}+g_{\mu0}g_{\nu0}(V^{0})^{2}$.
The metric $h$, restricted to the $g$-orthogonal complement $V^{\bot}$ of $V$ in each tangent
space, results in the non-degenerate \textquotedbl spatial metric\textquotedbl{}
with metric coefficients $h_{ij}$. 

~

\noindent
Note that the ``spatial metric'' is not necessarily the metric of a spacelike hypersurface, but in the case that the orthogonal complements of \( V \) are not integrable, rather just a bilinear form in each orthogonal complement of \( V \). As a matter of fact, the integrability condition states that the ``mixed''  metric coefficients can all be transformed to zero using an appropriate coordinate transformation (within the class of ``co-moving coordinates''). This condition can also coordinate-independently be expressed in terms of the rotation of the vector field \( V \). It ensures that (at least locally) all orthogonal complements of \( V \) fit together as tangent spaces of a family of spacelike hypersurfaces (as integral manifolds). The metric \( h \) is always (and everywhere) non-degenerate, since it lives in the orthogonal complement of the timelike vector \( V \) of the Lorentz manifold \( (M,g) \) and is simply the restriction of \( g \), and thus positive definite in that region. In particular, this has nothing to do with the signature change of the metric \( \tilde{g} \), and especially nothing to do with the radical.

~

Finally, we may show that from
$\det(G_{\mu\nu})=g_{00}\cdot\det(\tilde{G})=g_{00}\cdot\det([h_{ij}])$
on $\mathcal{H}$, and the fact that the ``spatial metric'' $h$
is non-degenerate, follows that $\det(G_{\mu\nu})\neq0$ on $\mathcal{H}$.
Consequently, from $d(\det([\tilde{g}_{\mu\nu}]))=-df\cdot\det(G_{\mu\nu})$
on $\mathcal{H}$ we get the biconditional statement 
\[
d(\det([\tilde{g}_{\mu\nu}]))(q)\neq0\;\forall q\in\mathcal{H}\Longleftrightarrow df(q)\neq0\;\forall q\in\mathcal{H}.
\]

\textbf{~}\\ Observe that for every $q\in\mathcal{H}$, $T_{q}\mathcal{H}$
is the kernel of the map $df_{q}\colon T_{q}M\longrightarrow T_{1}\mathbb{R}$.
Therefore the condition $V(f)=(df)(V)\neq0$, $\forall q\in\mathcal{H}$
ensures that $V\notin T_{q}\mathcal{H}$ and thus $V$ is not tangent
to $\mathcal{H}$. By additionally requiring that $(V(f))(q)=((df)(V))(q)\neq0$,
the radical is guaranteed to be transverse. 
\end{proof}
\begin{rem}
Remember that the triples $(g,V,f)$ form equivalence classes (see
Proposition~\ref{prop:Equivalence-classes-(V,g,f)}), where all triples
within an equivalence class yield the same metric $\tilde{g}$. Hence,
by picking an arbitrary triple $(g,V,f)$ in co-moving coordinates,
we have shown that the relation \textit{$d(\det([\tilde{g}_{\mu\nu}]))\neq0\;\forall q\in\mathcal{H}\Longleftrightarrow df\neq0\;\forall q\in\mathcal{H}$
}holds independently of a choice of coordinates. This result is coordinate-independent
due to the nature of $f$ as a scalar, even though the determinant
depends on coordinates. However, whether the differential of the determinant
on the hypersurface is zero or not is independent of the choice of
coordinates. This follows from the nonsingularity of the Jacobian
matrix associated with a coordinate transformation, in conjunction
with the multiplicativity of the determinant in matrix multiplication,
and the vanishing of the determinant on the hypersurface.

\textbf{~}

The equivalence $d(\det([\tilde{g}_{\mu\nu}]))\neq0\Longleftrightarrow df\neq0$
is, in this sense, a local statement, as it only holds on the hypersurface.
On the other hand, the statement's coordinate independence on the
hypersurface $\mathcal{H}$ implies that it remains unaffected by
the choice of coordinates, ensuring its validity across the entire
hypersurface. Thus, it possesses a global character in this regard---this aspect could be called \textit{$\mathcal{H}$-global}, see Definition in~\cite{Rieger-Embedding}. 
\end{rem}

Since for every $q\in\mathcal{H}$, the tangent space $T_{q}\mathcal{H}$
is the kernel of the map $df_{q}\colon T_{q}M\longrightarrow T_{1}\mathbb{R}$,
the condition $V(f_{q})=df_{q}(V)\neq0$, $\forall q\in\mathcal{H}$
ensures that $V\notin T_{q}\mathcal{H}$. Hence, $V$ is not tangent
to $\mathcal{H}$. This guarantees that the radical in $(M,\tilde{g})$
is transverse. If we are ready to relax our constraints and do not
impose this restriction on $V$, then we get a slightly modified version
of the Transformation Theorem, such that the following corollary holds.

\begin{cor}
\label{Corollary:General Transformation Theorem}For every $q\in M$
there exists a neighborhood $U(q)$, such that the metric $\tilde{g}$
associated with a signature-type changing manifold $(M,\tilde{g})$
is a transverse, type-changing metric with a transverse radical only
if $\tilde{g}$ is locally obtained from a Lorentzian metric $g$
via the Transformation Prescription $\tilde{g}=g+fV^{\flat}\otimes V^{\flat}$,
as introduced in Proposition~\ref{Proposition Transformation-Prescription-1},
where $df(q)\neq0$ for every $q\in\mathfrak{\mathcal{H}\coloneqq}f^{-1}(1)=\{p\in M\colon f(p)=1\}$.
\end{cor}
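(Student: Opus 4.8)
The plan is to read off this statement as the forward (``only if'') implication of the Local Transformation Theorem, with the transversality constraint $(V(f))(q)\neq0$ on the line element field deliberately discarded, and then to confirm that the pivotal determinant identity is insensitive to this relaxation. First I would assume that $\tilde{g}$ is a transverse, type-changing metric with a transverse radical, and apply Proposition~\ref{prop:Choice of (V,g,f)} together with Proposition~\ref{prop:Equivalence-classes-(V,g,f)} to produce, on a suitable neighborhood $U(q)$, an associated triple $(g,V,f)$ with $g\in Lor(M)$, $g(V,V)=-1$, and $f\in\mathfrak{F}(M)$ satisfying $\tilde{g}=g+fV^{\flat}\otimes V^{\flat}$. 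The point of departure from the theorem is that I impose \emph{only} $g(V,V)=-1$ on $V$ and allow $V$ to be tangent to $\mathcal{H}$, so that $(V(f))(q)$ may vanish.

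Next I would transcribe the co-moving coordinate computation of the Local Transformation Theorem essentially verbatim. In coordinates with $g_{00}(V^{0})^{2}=-1$ and $V^{i}=0$ for $i\neq0$, the determinant factorizes as $\det([\tilde{g}_{\mu\nu}])=(1-f)\det(G_{\mu\nu})$, whence on $\mathcal{H}$, where $f\equiv1$, one is left with $d(\det([\tilde{g}_{\mu\nu}])_{q})=-df\cdot\det(G_{\mu\nu})$. The essential observation---and the heart of the corollary---is that $\det(G_{\mu\nu})=g_{00}\cdot\det([h_{ij}])$ is nonzero for reasons entirely independent of how $V$ meets $\mathcal{H}$: the factor $g_{00}$ is nonzero because $V$ is $g$-timelike, and the spatial block $h_{ij}=g_{ij}+g_{0i}g_{0j}(V^{0})^{2}$ is positive definite because the correction term $V^{\flat}\otimes V^{\flat}$ annihilates the $g$-orthogonal complement $V^{\bot}$, forcing $h|_{V^{\bot}}=g|_{V^{\bot}}$, which is positive definite by the Lorentzian signature of $g$ together with the timelike normalization. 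Thus $\det(G_{\mu\nu})\neq0$ on $\mathcal{H}$ without ever invoking $(V(f))(q)\neq0$.

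With $\det(G_{\mu\nu})\neq0$ established, the identity $d(\det([\tilde{g}_{\mu\nu}])_{q})=-df\cdot\det(G_{\mu\nu})$ delivers the biconditional $d(\det([\tilde{g}_{\mu\nu}]))\neq0\Leftrightarrow df\neq0$ on $\mathcal{H}$, precisely as before. I would then invoke the defining property of a transverse type-changing metric, namely $d(\det([\tilde{g}_{\mu\nu}])_{q})\neq0$ for every $q\in\mathcal{H}$, to conclude $df\neq0$ for every $q\in\mathcal{H}$. This furnishes the local representation $\tilde{g}=g+fV^{\flat}\otimes V^{\flat}$ with $df\neq0$ on $\mathcal{H}$, which is exactly the asserted necessary condition.

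I expect the only genuinely load-bearing step to be the verification that $\det(G_{\mu\nu})\neq0$ persists after dropping $(V(f))(q)\neq0$; everything hinges on the fact that the non-degeneracy of the spatial block $[h_{ij}]$ follows solely from the Lorentzian signature of $g$ and the normalization $g(V,V)=-1$, and in no way from $V$ being transverse to $\mathcal{H}$. The remaining steps are a direct restatement of the already-proven computation, with the conclusion intentionally weakened so that no claim about the transversality of the radical---and hence no hypothesis $(V(f))(q)\neq0$---is needed.
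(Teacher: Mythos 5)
Your proposal is correct and takes essentially the same route as the paper: the paper offers no separate argument for this corollary, but obtains it exactly as you do, by observing that in the proof of the Local Transformation Theorem (Theorem~\ref{Transformation-Theorem-(local)-1}) the co-moving-coordinate computation yielding $\det([\tilde{g}_{\mu\nu}])=(1-f)\det(G_{\mu\nu})$ and hence $d(\det([\tilde{g}_{\mu\nu}]))\neq0\Leftrightarrow df\neq0$ on $\mathcal{H}$ never invokes $(V(f))(q)\neq0$, that condition being needed only to force transversality of the radical. Your explicit check that $\det(G_{\mu\nu})=g_{00}\cdot\det([h_{ij}])\neq0$ rests solely on $g(V,V)=-1$ and the Lorentzian signature of $g$ is precisely the point the paper relies on when it ``relaxes the constraint'' on $V$.
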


Contrariwise, if the additional constraint $(V(f))(q)=((df)(V))(q)=0$
for every $q\in\mathfrak{\mathcal{H}\coloneqq}f^{-1}(1)=\{p\in M\colon f(p)=1\}$
is imposed on the Transformation Prescription (Proposition~\ref{Proposition Transformation-Prescription-1}),
then we get an alternative version of the Transformation Theorem:

\begin{thm}[Local Transformation Theorem, tangent radical]
\label{Transformation-Theorem-(local)-tangent}
For every $q\in M$ there exists a neighborhood $U(q)$, such that
the metric $\tilde{g}$ associated with a signature-type changing
manifold $(M,\tilde{g})$ is a type-changing metric with a tangent
radical if and only if $\tilde{g}$ is locally obtained from a Lorentzian
metric $g$ via the Transformation Prescription $\tilde{g}=g+fV^{\flat}\otimes V^{\flat}$,
where, for all $q\in \mathcal H \coloneqq f^{-1}(1)=\{p\in M\colon f(p)=1\}$,
\[
df(q)\neq0 \quad \text{and} \quad (df(V))(q)\neq0.
\]
\end{thm}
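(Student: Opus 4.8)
The plan is to run the proof of the Local Transformation Theorem (transverse radical) essentially \emph{mutatis mutandis} up through the determinant computation, and to diverge only at the final interpretation step. First I would adopt the same co-moving coordinates relative to $g$, namely $g_{00}(V^{0})^{2}=g(V,V)=-1$ and $V^{i}=0$ for $i\neq0$, record the matrix $[\tilde{g}_{\mu\nu}]$, and extract the factorization $\det([\tilde{g}_{\mu\nu}])=(1-f)\det(G_{\mu\nu})$. The Schur-complement identity $\det(G_{\mu\nu})=g_{00}\det([h_{ij}])$ with $h_{ij}=g_{ij}-g_{i0}g_{j0}/g_{00}$, together with the positive-definiteness of the spatial block $h_{ij}$, depends only on $g$ being Lorentzian with timelike $V$ (here $g_{00}<0$), so this entire portion of the argument is insensitive to whether the radical is transverse or tangent. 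Exactly as before one then obtains $d(\det([\tilde{g}_{\mu\nu}]))=-df\cdot\det(G_{\mu\nu})$ on $\mathcal{H}$ with $\det(G_{\mu\nu})\neq0$, whence the codimension-$1$ transverse type-changing condition $d(\det([\tilde{g}_{\mu\nu}]))\neq0$ is equivalent to $df\neq0$ on $\mathcal{H}$. This makes $\mathcal{H}=f^{-1}(1)$ a smoothly embedded hypersurface and forces $Rad_{q}$ to be one-dimensional for each $q\in\mathcal{H}$.

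The decisive new step is to identify the radical explicitly. Setting $f=1$ in the computations of the Transformation Prescription yields $\tilde{g}(V,V)=f-1=0$ and $\tilde{g}(V,E_{j})=0$ for every member $E_{j}$ of a $g$-Lorentzian frame completing $V$; hence $\tilde{g}(V,\centerdot)=0$ on $\mathcal{H}$, so $V_{q}\in Rad_{q}$. Since $Rad_{q}$ is one-dimensional by the previous paragraph, this forces $Rad_{q}=\operatorname{span}(V_{q})$ for every $q\in\mathcal{H}$. Thus the transverse/tangent alternative for the radical is governed entirely by the position of the single vector $V_{q}$ relative to $T_{q}\mathcal{H}=\ker(df_{q})$.

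With this identification the biconditional is immediate. Because $T_{q}\mathcal{H}=\ker(df_{q})$, the radical is tangent, i.e. $Rad_{q}\subset T_{q}\mathcal{H}$, precisely when $V_{q}\in\ker(df_{q})$, that is when $(V(f))(q)=(df)_{q}(V)=0$ for every $q\in\mathcal{H}$; this is exactly the hypothesis distinguishing the present theorem from the transverse-radical version, in which the opposite nonvanishing holds. For the converse I would start from any triple $(g,V,f)$ supplied by Proposition~\ref{prop:Choice of (V,g,f)} with $g(V,V)=-1$, $df\neq0$, and $V(f)=0$ on $\mathcal{H}$, form $\tilde{g}=g+fV^{\flat}\otimes V^{\flat}$, and read the computation backwards: $df\neq0$ furnishes the embedded hypersurface and the one-dimensional radical $\operatorname{span}(V)$, while $V(f)=0$ places that radical inside $T_{q}\mathcal{H}$, producing a type-changing metric with tangent radical.

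The only genuine subtlety I anticipate is the compatibility of $df\neq0$ with $V(f)=0$: one must confirm that requiring $V_{q}\in\ker(df_{q})$ does not collapse the hypersurface. This is harmless, since $\ker(df_{q})$ is $(n-1)$-dimensional whenever $df_{q}\neq0$, so $V_{q}$ may lie inside it while some spatial derivative of $f$ stays nonzero; in co-moving coordinates this means precisely $\partial_{0}f(q)=0$ (forced by $df(V)=V^{0}\partial_{0}f=0$ with $V^{0}\neq0$) and $df$ pointing in a purely spatial direction. I would close by invoking the coordinate-independence observation following the transverse case to confirm that the equivalence, established in co-moving coordinates, holds intrinsically across all of $\mathcal{H}$.
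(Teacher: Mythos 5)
Your proposal is correct and takes essentially the same approach as the paper: the paper's own proof is a single sentence observing that the condition $V(f)=(df)(V)=0$ on $\mathcal{H}$ forces $V\in\ker(df_{q})=T_{q}\mathcal{H}$, with the whole co-moving-coordinate determinant computation carried over unchanged from the transverse-radical theorem, exactly as you do. Your explicit identification $Rad_{q}=\operatorname{span}(V_{q})$ via $\tilde{g}(V,\centerdot)=0$ when $f=1$ is left implicit in the paper's one-line proof (it appears separately in Section~\ref{sec: Induced metric}), but it is the same argument, not a different route.
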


\begin{proof}
The alternative condition $V(f)=(df)(V)=0$, $\forall q\in\mathcal{H}$
ensures that $V\in T_{q}\mathcal{H}$, and thus $V$ is tangent to
$\mathcal{H}$. 
\end{proof}
\begin{example}
\label{exa:Example-18}Consider on $\mathbb{R}^{2}$ the metric $ds^{2}=x(dt)^{2}+(dx)^{2}$
(\cite{Kriele + Martin - Black Holes ...}, page 9). This is a signature-type
changing metric with

~

$\triangle\coloneqq\det([g_{ij}])=x$, 

$d\triangle=\frac{\partial x}{\partial t}dt+\frac{\partial x}{\partial x}dx=dx$,

$\mathcal{H}=\{q\in M\mid x(q)=0\}$.

\textbf{~}\\ The differential is $d\triangle=dx\neq0$ on $\mathcal{H}$,
with $\mathcal{H}$ being a smoothly embedded hypersurface. The $1$-dimensional
radical is given by $\textrm{Rad}_{q}=\textrm{span}\{\frac{\partial}{\partial t}\}$
for $q\in\mathcal{H}$, and it is tangent with respect to $\mathcal{H}$.

~

\begin{figure}[H]
\centering{}\includegraphics[scale=0.66]{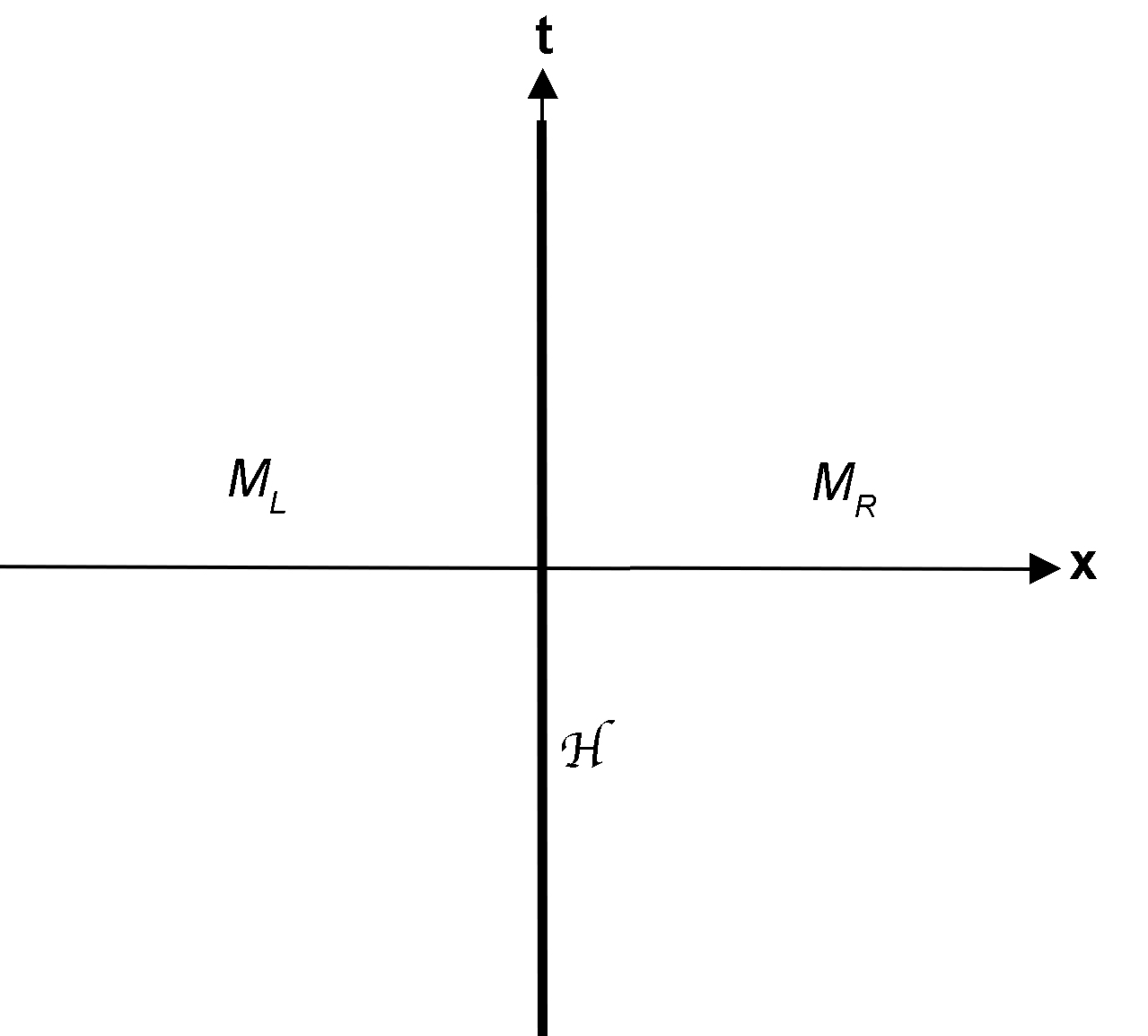}\caption{{\small{}The radical is tangent with respect to the hypersurface $\mathcal{H}$.
The latter one is located between the Lorentzian region $M_{L}$ and
the Riemannian region $M_{R}$.}}
\end{figure}

\textbf{~}\\ Here the transverse, signature-type changing metric
$ds^{2}=x(dt)^{2}+(dx)^{2}$ is \textit{given}. So we need to \textit{find}
a suitable Lorentzian metric $g$ as well as a global smooth function
$f\colon M\longrightarrow\mathbb{R}$ and a non-vanishing line element
field $V$, both with the properties mentioned in~\ref{Corollary:General Transformation Theorem}.

\textbf{~}\\ By an educated guess, we start with $g=-(dt)^{2}+(dx)^{2}$,
from which follows $x(dt)^{2}=-(dt)^{2}+fV^{\flat}\otimes V^{\flat}$.
As non-vanishing line element field we pick the one related to $V=\frac{\partial}{\partial t}$,
with $V^{\flat}=g(V,\centerdot)=g(\frac{\partial}{\partial t},\centerdot)=-(dt\otimes dt)(\frac{\partial}{\partial t},\centerdot)=-dt$.
Hence, $x(dt)^{2}=-(dt)^{2}+f\cdot(dt)^{2}\Longrightarrow f(t,x)=1+x$.\textbf{~}\\ Consequently
this yields $\tilde{g}=g+(1+x)(dt)^{2}=-(dt)^{2}+(dx)^{2}+(1+x)(dt)^{2}$
with $f(t,x)=1+x$. Then on $\mathcal{H}$ for $x=0\Longleftrightarrow f(t,0)=1$
we have $\tilde{g}(V,X)=(dx)^{2}(V,X)=0\;\forall X$, and this means
$V\in \textrm{Rad}_{p(t,0)}\;\forall t$. And since $\textrm{Rad}_{q(t,0)}=\textrm{span}\{V\}=\textrm{span}(\{\frac{\partial}{\partial t}\})$
the radical is tangent with respect to $\mathcal{H}$.
\end{example}

\subsection{Global Transformation Theorem}

By imposing an extra constraint on the local variant of the Transformation
Theorem~\ref{Transformation-Theorem-(local)-1}, we can establish
the validity of the global version of the Transformation Theorem:
The Riemannian sector with boundary $M_{R}\cup\mathcal{H}$ is required
to possess a smoothly defined non-vanishing line element field that
is transverse to the boundary $\mathcal{H}$. The hypersurface $\mathcal{H}$
can be viewed as common connected boundary of the Riemannian region
$M_{R}$ and the Lorentzian region $M_{L}$. A vector field $V$ is
transverse to the boundary $\mathcal{H}$ if, at each point $p\in\mathcal{H}$,
$V(q)\notin T_{q}\mathcal{H}$ (i.e. $V(p)$ is not tangent to $\mathcal{H}$).
This typically means $V$ has a component normal to $\mathcal{H}$
that is non-zero. Recall that a manifold with boundary $M_{R}\cup\mathcal{H}$
is a topological space in which each point $p\in M_{R}\cup\mathcal{H}$
has a neighborhood homeomorphic to either $\mathbb{R}^{n}$ or the
upper half-space $\mathbb{H}^{n}=\{(x_{1},x_{2},\ldots,x_{n})\}\in\mathbb{R}^{n}:x_{n}\geq0\}$
for some $n$. In the subsequent discussion, we explore the means
by which this supplementary constraint can be guaranteed.

\subsubsection{Non-vanishing vector field in $M_{R}\cup\mathcal{H}$ transverse
to $\mathcal{H}$}

There are certain results and theorems in differential topology that
provide conditions under which a non-vanishing vector field exists
on a manifold~\cite{Hall,Hirsch,Spivak}. For more general manifolds,
the existence of a non-vanishing vector field is related to the topology
of the manifold:

\smallskip{}

(i) If $M_{R}\cup\mathcal{H}$ is a noncompact connected manifold
with boundary, then it admits a nowhere vanishing vector field.

\vspace{0.01\baselineskip}

(ii) If $M_{R}\cup\mathcal{H}$ is compact and connected with boundary,
then $M_{R}\cup\mathcal{H}$ admits a nowhere vanishing vector field
if $\chi(M_{R}\cup\mathcal{H})=0$, where $\chi$ denotes the Euler
characteristic. 

\textbf{~}\\ For a manifold with boundary, a non-vanishing vector
field \textit{transverse} to the boundary may or may not exist, depending
on the specific characteristics of the manifold and its boundary.
The condition for the existence of a non-vanishing vector field on
a differentiable manifold with boundary such that the vector field
is transverse to the boundary involves the notion of a vector field
being ``outward-pointing'' along the boundary~\cite{Tu}.

\textbf{~}\\ First, let us recall some general definitions~\cite{Jubin,Lee - Introduction to Smooth Manifolds,Tu}.

~
\begin{defn}
Let $M_{R}\cup\mathcal{H}$ be a manifold with boundary $\partial M_{R}=\mathcal{H}$
and $q\in\mathcal{H}$. A tangent vector $V_{q}\in T_{q}(M_{R}\cup\mathcal{H})$
is said to be inward-pointing if $V_{q}\notin T_{q}(\mathcal{H})$
and there is an $\epsilon>0$ and an associated curve $\gamma\colon[0,\epsilon)\longrightarrow M_{R}\cup\mathcal{H}$
such that $\gamma(0)=q$, $\gamma((0,\epsilon))\subset M^{\circ}$,
with $\gamma'(0)=V_{q}$. Correspondingly, we say a vector field $V_{q}\in T_{q}(M_{R}\cup\mathcal{H})$
is outward-pointing if $-V_{q}$ is inward-pointing.
\end{defn}

~
\begin{defn}
\label{def:collar-neighborhood}A collar of a manifold $M_{R}\cup\mathcal{H}$
with boundary $\partial M_{R}=\mathcal{H}$ is a diffeomorphism $\phi=(\phi_{1},\phi_{2})$
from an open neighborhood $U(\mathcal{H})$ of $\mathcal{H}$ to the
product $\mathbb{R}^{+}\times\mathcal{H}$ such that $\phi_{2}\mid_{\mathcal{H}}=\textrm{id}_{\mathcal{H}}$.
In particular, $\phi(\mathcal{H})=\{0\}\times\mathcal{H}$.
\end{defn}

\begin{rem}
\label{rem:Transverse + tangent component of V}Lee~\cite{Lee - Introduction to Smooth Manifolds}
refers to a different but equivalent definition: An open neighborhood
$U(\mathcal{H})$ of $\mathcal{H}$ is called a collar neighborhood
if it is the image of a smooth embedding $\phi^{-1}:[0,\varepsilon)\times\mathcal{H}\longrightarrow U(\mathcal{H})\subset(M_{R}\cup\mathcal{H})$
with $\varepsilon>0$.
\end{rem}

According to the Brown\textquoteright s collaring theorem~\cite{Brown - Collar Theorem,Lee - Introduction to Smooth Manifolds},
the boundary $\mathcal{H}$ has a collar neighborhood $U(\mathcal{H})$.
Given a vector field $V$ on $M_{R}\cup\mathcal{H}$ with a collar
$\phi$ we define on the open neighborhood $U(\mathcal{H})$ the tangent
and transverse components of $V$ with respect to $\phi$:\textbf{~}\\ 
\[
V_{\parallel}\coloneqq T\phi_{2}\circ V\colon U(\mathcal{H})\longrightarrow T(\mathcal{H})
\]

\[
V_{\pitchfork}\coloneqq T\phi_{1}\circ V\colon U(\mathcal{H})\longrightarrow\mathbb{R}.
\]

\textbf{~}\\ Moreover, a vector field is termed $0$-transverse if
it is transverse to the zero section of the tangent bundle.

~

\textbf{If $M_{R}\cup\mathcal{H}$ is a noncompact and connected manifold
with boundary:~}\\ In the case where $M_{R}\cup\mathcal{H}$ is connected
and noncompact with a boundary, a non-vanishing vector field transverse
to $\mathcal{H}$ can always be constructed using the following reasoning:

~

\uline{Local existence}: One standard way to see the local existence
of such a vector field is through the construction of a collar neighborhood
of the boundary. Using a collar neighborhood $U(\mathcal{H})$, we
can define a local vector field that is transverse to the boundary,
namely the vector field that points in the direction of the $[0,\varepsilon)$
coordinate in the collar: On $U(\mathcal{H})\subset(M_{R}\cup\mathcal{H})$
we use the collar coordinates to set $V(\phi^{-1}(t,\hat{y}))=\frac{\partial}{\partial t}\mid_{\phi^{-1}(t,\hat{y})}$,
where $\frac{\partial}{\partial t}$ is the vector field in the collar
direction, where $\hat{y}$ represents $\hat{y}=(y^{1},\ldots,y^{n-1})$. 

\textbf{~}\\ Note that in our setting, because $\textrm{Rad}_{q}\cap T_{q}\mathcal{H}=\{0\}$
$\forall q\in\mathcal{H}$, on $\mathcal{H}$, all non-spacelike vectors
are lightlike. These lightlike vectors are naturally divided into
two classes: those pointing towards $M_{L}$ and the those pointing
towards $M_{R}$. And instead of a collar neighborhood, we have an
$\mathcal{H}$-global \textcolor{black}{neighborhood $U=\bigcup_{q\in\mathcal{H}}U(q)$
of $\mathcal{H}$ }at our disposal~{\cite{Rieger-Embedding}.
Within this neighborhood $U$ the absolute time function $\mathfrak{h}(t,\mathbf{\hat{x}}):=t$,
with }$\mathbf{\hat{x}}=(x^{1},\ldots,x^{n-1})$, imposes a natural time direction by postulating that the future corresponds
to the increase of the absolute time function. Accordingly $\frac{\partial}{\partial t}$
serves as a vector field with an initial point on $\mathcal{H}$ that
points in the direction in which $t=\mathfrak{h}(t,\mathbf{\hat{x}})$ increases and $x_{i}$
is constant. 

~

\uline{Global existence}: For each point $p\in\mathcal{H}$ there
is a local chart $(U,\phi)$ mapping $p$ to a neighborhood in $\mathbb{H}^{n}$.
In each local chart $\phi:U\rightarrow\mathbb{H}^{n}$, we can construct
a vector field that points directly out of or into the boundary of
$\mathcal{H}$. Given a cover of $M_{R}\cup\mathcal{H}$ by such local
charts, we can use a smooth partition of unity $\{\rho_{i}\}$$\LyXZeroWidthSpace$
subordinate to this cover and piece together these local transverse
vector fields into a global vector field on $M_{R}\cup\mathcal{H}$.
This extension of these local constructions to a global vector field
on $M_{R}\cup\mathcal{H}$ can be done in a way that the resulting
vector field remains smooth and transverse to $\mathcal{H}$: The
global vector field $V$ on $M_{R}\cup\mathcal{H}$ can be constructed
as a weighted sum of the local vector fields $V_{i}$ using the partition
of unity functions $\rho_{i}$. Specifically, $V=\sum_{i}\rho_{i}V_{i}$.
Since each $V_{i}$ is transverse to the boundary in its chart and
the transition functions between charts preserve transversality, the
resulting global vector field $V$ is transverse to $\mathcal{H}$.

~

However, a transverse vector field on a manifold with boundary does
not recessarily have isolated zeros. For a vector field to have non-isolated
zeros, the zeros must form a set of positive measure (such as a continuum
of points). Whether a transverse vector field has non-isolated zeros
depends on the construction of the vector field and the topology of
the manifold.

~

\uline{Generic vector field}: In a typical construction of a transverse
vector field on a manifold with boundary, we can ensure that zeros,
if they exist, are isolated by slightly perturbing the vector field
by means of a generic Morse function~\cite{Guillemin + Pollack,Matsumoto}.
This is because generic vector fields on smooth manifolds tend to
have isolated zeros due to the transversality argument: if a vector
field $V$ is transverse to the zero section of the tangent bundle,
its zeros will be isolated. 

~

\uline{Remove isolated zeros}: If $M_{R}\cup\mathcal{H}$ is noncompact,
we can establish a compact exhaustion, denoted as $\varnothing=K_{0}\subset K_{1}\subset K_{2}\subset\ldots\subseteq M=\bigcup_{i}K_{i}$.
Zeros of a vector field existing in $K_{i}\setminus K_{i-1}$ are
systematically pushed to $K_{i+1}\setminus K_{i}$. Notably, this
process leaves the vector fields defined on $K_{i-1}$ unchanged.
By continuously pushing all zeros of a vector field towards infinity,
we obtain a well-defined nonvanishing vector field on $T(M_{R}\cup\mathcal{H})$,
where $T(M_{R}\cup\mathcal{H})$ represents a tangent bundle on $M_{R}\cup\mathcal{H}$.\textbf{~}\\{}

The argument presented above can be generalized to the case of a line
element field. Given a distinct transverse vector field without zeros
specified on the hypersurface $\mathcal{H}$, it follows that the
line element field associated with this vector field can also be extended
along the hypersurface to a zero-free line element field in the Riemannian
sector $M_{R}$. This is possible because a ``proper''
zero-free vector field exists in $M_{R}\cup\mathcal{H}$, from which
the line element field is derived by ``forgetting'' its orientation.
Essentially, this boils down to determining the existence of a non-trivial
section of the normal bundle of $\mathcal{H}$.

\textbf{~}\\{}

\textbf{If $M_{R}\cup\mathcal{H}$ is a compact and connected manifold
with boundary:~}\\ In the case where $M_{R}\cup\mathcal{H}$ is connected
and compact with a boundary, the situation is a bit more complicated.
The existence of a non-vanishing vector field transverse to the boundary
of a compact manifold $M_{R}\cup\mathcal{H}$ depends on the topology
and geometry of the manifold. One key result that addresses this question
is the \textit{generalized} Poincar\'{e}-Hopf theorem~\cite{Jubin}
(see~\cite{Milnor} for the original Poincar\'{e}-Hopf theorem).

\begin{rem}
This version of the Poincar\'{e}-Hopf theorem offers a generalization
that accommodates a boundary without requiring the vector field to
point outward, while considering the manifold's orientation. Consequently,
it extends the theorem to allow vector fields that do not necessarily
point outward at the boundary, ensuring that the vector field remains
transverse to the boundary. This is relevant to our request for a
non-vanishing vector field transverse to the boundary because it provides
the necessary theoretical framework to handle vector fields on manifolds
with boundaries, considering their indices and behaviors at the boundaries.
However, it does not guarantee the existence of such vector fields
in all cases (see, for instance, Example~\ref{exa: Vector field on B^2}
below).\textbf{ }Although the cited article by Jubin~\cite{Jubin}
has not been peer-reviewed, we consider it an appropriate reference.
It gives a self-contained presentation of the proof, which, together
with the explanation of special terminology, is scattered over various
articles, and provides related references.\textbf{~}\\{}
\end{rem}

Note that in the context of a vector field $V$ on a manifold $M$,
a zero of $V$ refers to a point $p$ in the manifold where the vector
field vanishes, meaning $V(p)=0$. So if the sum of the indices of
the zeros of a vector field $V$ on a compact manifold $M$ is $0$,
then the Poincar\'{e}-Hopf index theorem implies that the Euler characteristic
$\chi(M)=0$. However, having zero Euler characteristic does not provide
a direct guarantee of the existence of a non-vanishing vector field
transverse to the boundary. The Poincar\'{e}-Hopf theorem provides
a necessary condition for the existence of a nowhere vanishing vector
field. It states that if $M_{R}\cup\mathcal{H}$ is a compact manifold
with boundary and admits a nowhere vanishing vector field, then the
Euler characteristic $\chi(M_{R}\cup\mathcal{H})$ must be zero. So,
while $\chi(M_{R}\cup\mathcal{H})=0$ is a necessary condition, it
is not a sufficient condition for the existence of a nowhere vanishing
vector field on a compact manifold with boundary.

\textbf{~}\\ Moreover, compact manifolds with boundary and Euler
characteristic zero do not naturally exist (the term ``naturally''
suggests that such manifolds are not commonly encountered without
intentional construction or modification). The construction of such
manifolds involves combining closed manifolds in a specific manner---primarily by attaching handles to wellknown manifolds, such as a
torus with handles, real projective space with handles, and handlebodies
with boundaries.

\vspace{0.01\baselineskip}

\begin{example}
Consider a torus with handles, which can be thought of as a higher-genus
surface obtained by attaching handles to a torus. If the handles are
attached in a way that preserves the orientation of the torus, it's
possible to construct a non-vanishing vector field transverse to the
boundary. However, if the handles are attached in a way that reverses
the orientation of the torus, then constructing such a vector field
becomes impossible due to the Poincar\'{e}-Hopf theorem. Since the
Euler characteristic of the torus with handles is not zero, there
must be points where a non-vanishing vector field is tangent to the
boundary.\textbf{~}\\ Therefore, a non-vanishing vector field transverse
to the boundary is not guaranteed in the general case. And in summary,
the existence of a non-vanishing vector field transverse to the boundary
for a torus with handles depends on the specific way the handles are
attached and whether the resulting orientation is consistent with
the Poincar\'{e}-Hopf theorem.
\end{example}

\vspace{0.01\baselineskip}

\begin{example}
\label{exa: Vector field on B^2}Consider the unit disk $B^{2}$.
To be transverse at the boundary, a non-vanishing vector field should
consistently point outward. But we know~\cite{Munkres} that given
a non-vanishing vector field $V$ on $B^{2}$, there exists a point
of $S^{1}$ where the vector field $V$ points directly inward and
a point of $S^{1}$where it points directly outward. Thus we arrive
at a contradiction with the condition for transversality. The reason
is as follows:\textbf{~}\\ If the vector field $(x,V(x))$ on $B^{2}$
(written as an ordered pair) changes direction between pointing ``outward''
and ``inward'' along the boundary of a differentiable manifold,
it means that the field is not consistently transverse to the boundary,
but becomes tangent at some boundary points. In mathematical terms
this means that for some $x\in S^{1}$ we have $V(x)=ax$ for some
$a<0$, where $V(x)=ax$ for some $a>0$ means pointing directly outward~\cite{Tu}.
Hence, the vector field $V$ vanishes if $V(x)=0$. Hence, a vector
field on $B^{2}$ that is nowhere-vanishing cannot point inward everywhere
or outward everywhere, so it has to be tangent to the boundary somewhere
by continuity.\textbf{~}\\ In other words, if the vector field fails
to be transverse to the boundary this includes inconsistency in direction
across the boundary, violating the desired conditions for a well-defined
transverse vector field. Analogously this also applies to the associated
line element field $\{V,-V\}$.\textbf{~}\\ This example reflects
the situation for the $2$-dimensional case of the \textquoteleft no
boundary proposal\textquoteright{} spacetime.
\end{example}

In conclusion, when $M_{R}\cup\mathcal{H}$ is compact with a boundary,
the relationship between the Euler characteristic $\chi(M_{R}\cup\mathcal{H})$
and the existence of a nowhere vanishing vector field on a compact
manifold with boundary becomes more subtle. There are no clear conditions
that we can impose on $M_{R}\cup\mathcal{H}$ for the existence of
a nowhere vanishing vector field that is also transverse to the boundary.
As a consequence, we will either explicitly require the existence
of a non-vanishing vector field transverse to the boundary, or limit
our analysis exclusively to the non-compact case where constructing
such a vector field transverse to the boundary is always feasible.
Analogously, this also applies to the associated line element field
$\{V,-V\}$.

~

\subsubsection{Transformation Theorem}

In the context of the global Transformation Theorem, as opposed to
the local version, the emphasis is also placed on the region in the
Riemannian sector that is ``distant'' from the hypersurface. The
relationship between the Riemannian sector and the hypersurface is
crucial in determining whether the equivalence statement in the local
Transformation Theorem~\ref{Transformation-Theorem-(local)-1} is
applicable in the desired manner. The reasoning above yields now quite
easily the proof for the global Transformation Theorem~\ref{Transformation-Theorem-(global)-1}:

\begin{thm*}[Global Transformation Theorem, transverse radical]
Let $M$ be an transverse, signature-type changing manifold of $\dim(M)=n\geq2$,
which admits in $M_{R}\cup\mathcal{H}$ a smoothly defined non-vanishing
line element field that is transverse to the boundary $\mathcal{H}$.
Then the metric $\tilde{g}$ associated with a signature-type changing
manifold $(M,\tilde{g})$ is a transverse, type-changing metric with
a transverse radical if and only if $\tilde{g}$ is obtained from
a Lorentzian metric $g$ via the Transformation Prescription $\tilde{g}=g+fV^{\flat}\otimes V^{\flat}$,
where, for all $q\in \mathcal H \coloneqq f^{-1}(1)=\{p\in M\colon f(p)=1\}$,
\[
df(q)\neq0 \quad \text{and} \quad (df(V))(q)\neq0.
\]
\end{thm*}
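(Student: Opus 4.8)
The strategy is to deduce the global statement from the Local Transformation Theorem~\ref{Transformation-Theorem-(local)-1} by upgrading the pointwise data to a single globally defined triple $(g,V,f)$. The only object that cannot be read off locally is the line element field $V$; once a global $V$ is produced, Proposition~\ref{prop:Choice of (V,g,f)} determines $g$ and $f$ over all of $M$ (with $f\equiv 1$ on $\mathcal{H}$), and the local equivalence then holds at every $q\in\mathcal{H}$ simultaneously. I would therefore organize the proof around the construction of one global $V$ that is timelike in $M_L$ with respect to $\tilde g$ and transverse to $\mathcal{H}$.

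The backward implication requires no new global input and is handled exactly as in the local case. Assuming $\tilde g = g + fV^{\flat}\otimes V^{\flat}$ with $df\neq 0$ and $(df)(V)\neq 0$ on $\mathcal{H}=f^{-1}(1)$, the Transformation Prescription (Proposition~\ref{Proposition Transformation-Prescription-1}) shows $\tilde g$ changes type with degeneracy locus $\mathcal{H}$, the identity $d(\det[\tilde g_{\mu\nu}])=-\,df\cdot\det(G_{\mu\nu})$ together with $\det(G_{\mu\nu})\neq 0$ on $\mathcal{H}$ shows the change is transverse, and $(df)(V)\neq 0$ forces $V\notin T_{q}\mathcal{H}$; since $g(V,V)=-1$ makes $V$ span $Rad_{q}$ on $\mathcal{H}$, this is precisely transversality of the radical. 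All of these are pointwise assertions, so their conjunction over $q\in\mathcal{H}$ is the global conclusion.

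For the forward implication the work is concentrated in building the global field $V$. On the Lorentzian sector $M_L$ the restriction $\tilde g|_{M_L}$ is a bona fide Lorentzian metric, so the standard equivalence between Lorentzian metrics and line element fields furnishes a global timelike line element field there. On $M_R\cup\mathcal{H}$ the hypothesis supplies a nowhere-vanishing line element field transverse to the boundary $\mathcal{H}$; on $\mathcal{H}$ the transverse radical singles out the one-dimensional direction this field must realize, since any normalized $V$ spans $Rad_{q}$ there. I would then glue the two fields across $\mathcal{H}$ inside a collar neighborhood, using a partition of unity subordinate to $\{M_L^{\circ},\,U(\mathcal{H}),\,M_R^{\circ}\}$ and, in the noncompact case, the exhaustion argument of the previous subsection to push any isolated zeros off to infinity, thereby obtaining a smooth, nowhere-vanishing $V$ on all of $M$ that is timelike in $M_L$ and transverse to $\mathcal{H}$. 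Feeding this $V$ into Proposition~\ref{prop:Choice of (V,g,f)} yields the global $g$ and $f$, and applying the Local Transformation Theorem~\ref{Transformation-Theorem-(local)-1} at each $q\in\mathcal{H}$ gives $df\neq 0$ and $(df)(V)\neq 0$, completing the equivalence.

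I expect the gluing across $\mathcal{H}$ to be the decisive step and the only genuine obstacle. Two points need care: the interpolation between the timelike field on $M_L$ and the transverse field on $M_R\cup\mathcal{H}$ must not create zeros --- which is exactly why the transverse-field hypothesis (or, failing that, noncompactness) is imposed, the compact case being able to fail as Example~\ref{exa: Vector field on B^2} shows --- and, because one is gluing line element fields rather than vector fields, the $\pm$ orientation ambiguity of $\{V,-V\}$ must be matched consistently across $\mathcal{H}$ before the pieces can be combined. Once these are arranged, the remainder is a direct appeal to the two propositions and the local theorem, which is why the global statement follows ``quite easily'' from the preceding analysis.
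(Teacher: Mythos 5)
Your proposal takes essentially the same route as the paper: the backward direction is the pointwise Transformation Prescription computation, and the forward direction reduces to the Local Transformation Theorem~\ref{Transformation-Theorem-(local)-1} once the hypothesized transverse line element field on $M_{R}\cup\mathcal{H}$ and the timelike field furnished by $\tilde{g}\mid_{M_{L}}$ are combined into a single global $V$, after which Proposition~\ref{prop:Choice of (V,g,f)} determines $g$ and $f$. The paper's own proof is in fact terser than yours---it merely observes that any global triple $(g,V,f)$ yields a globally defined $\tilde{g}$ and that ``the local version entails the global version,'' delegating the existence of the transverse field to the preceding subsection---so your explicit handling of the collar gluing, the alignment of $V$ with the radical on $\mathcal{H}$, and the $\pm$-orientation matching of the line element field supplies details the paper leaves implicit.
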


\begin{proof}
Since $(M,g)$ is Lorentzian, there always exist a well-defined smooth
timelike line element field $\{V,-V\}$ on all of $M$. Consequently,
any triple $(g,V,f)$, where $g$ is a Lorentzian metric, $V\in\{V,\text{\textemdash}V\}$
a non-vanishing line element field on $M$ with $g(V,V)=-1$ and $f\colon M\longrightarrow\mathbb{R}$
a smooth function, yields a signature-type changing metric $\tilde{g}=g+fV^{\flat}\otimes V^{\flat}$,
which is defined over the entire manifold $M$ (see\textit{ }Propositon~\ref{Proposition Transformation-Prescription-1}).
Hence, the local version of the Transformation Theorem~\ref{Transformation-Theorem-(local)-1}
entails also the global version.
\end{proof}

\begin{rem}
Remember that the triples $(g,V,f)$ form equivalence classes (see
Proposition~\ref{prop:Equivalence-classes-(V,g,f)}), where all triples
within an equivalence class yield the same metric $\tilde{g}$. If
an element's properties change in a way that aligns with a different
equivalence class, it may move accordingly to a different equivalence
class, as elucidated in Subsection~\ref{subsec:Representation-of (g,V,f)}.
Furthermore, it is ruled out that, amid all these (described) perturbations
within an equivalence class, a transformation from an $f$ with $df\neq0$
to an $f$ with $df=0$ can occur. This is evident from the fact that
$g_{00}=-1$ (this is due to the specific choice of $V$), resulting
in $\tilde{g}_{00}=(1-f)g_{00}$ because of the Transformation Prescription.
Since $\tilde{g}$ remains unchanged within an equivalence class and,
thus (with initially arbitrary coordinates, but perturbed in such
a way as to preserve the shape of $V$), especially $\tilde{g}_{00}$
remains unchanged as a function of the respective coordinates, $f$
also does not change as a function of the respective coordinates (although
$\tilde{g}_{00}$ and $f$ do change as functions on $M$, but not
as functions of the respective coordinates).
\end{rem}

Instead of requiring the existence of a non-vanishing vector field
transverse to the boundary, we can limit our analysis exclusively
to the non-compact case where constructing such a vector field transverse
to the boundary is always feasible. This yields the following

\begin{cor}
Let $M$ be an $n$-dimensional transverse, signature-type changing
manifold with $M_{R}\cup\mathcal{H}$ non-compact. Then the metric
$\tilde{g}$ associated with a signature-type changing manifold $(M,\tilde{g})$
is a transverse, type-changing metric with a transverse radical if
and only if $\tilde{g}$ is obtained from a Lorentzian metric $g$
via the Transformation Prescription $\tilde{g}=g+fV^{\flat}\otimes V^{\flat}$,
where, for all $q\in \mathcal H \coloneqq f^{-1}(1)=\{p\in M\colon f(p)=1\}$,
\[
df(q)\neq0 \quad \text{and} \quad (df(V))(q)\neq0.
\]\end{cor}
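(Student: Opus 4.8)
The plan is to derive this statement as a direct specialization of the Global Transformation Theorem~\ref{Transformation-Theorem-(global)-1}. That theorem already supplies the desired biconditional under the hypothesis that $M_{R}\cup\mathcal{H}$ admits a smoothly defined non-vanishing line element field transverse to the boundary $\mathcal{H}$. Hence the only thing left to establish is that this hypothesis is \emph{automatic} once we assume $M_{R}\cup\mathcal{H}$ is non-compact (and connected, as all manifolds here are assumed connected). So the entire content of the proof is the verification of the transversality hypothesis in the non-compact case, after which one invokes Theorem~\ref{Transformation-Theorem-(global)-1} verbatim.

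First I would construct the required vector field following the reasoning already assembled in the subsection on non-vanishing vector fields transverse to $\mathcal{H}$. By Brown's collaring theorem the boundary $\mathcal{H}=\partial M_{R}$ possesses a collar neighborhood $U(\mathcal{H})\cong[0,\varepsilon)\times\mathcal{H}$, and on it the coordinate field $\partial/\partial t$ is smooth, non-vanishing, and transverse to $\mathcal{H}$ by construction. To globalize, I would cover $M_{R}\cup\mathcal{H}$ by charts into $\mathbb{H}^{n}$, build a transverse field $V_{i}$ in each chart, and patch them together via a partition of unity $\{\rho_{i}\}$ subordinate to the cover, setting $V=\sum_{i}\rho_{i}V_{i}$. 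Since transversality to $\mathcal{H}$ is preserved under the convex combinations produced by the partition of unity and under the transition maps, the resulting global $V$ is smooth and transverse to $\mathcal{H}$.

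The field $V$ so obtained need not be non-vanishing, and this is where non-compactness enters and constitutes the main obstacle. I would first perturb $V$ by a generic (Morse-type) argument so that its zeros, if any, are isolated; transversality to the zero section guarantees this. Then, using that $M_{R}\cup\mathcal{H}$ is non-compact, I would fix a compact exhaustion $\varnothing=K_{0}\subset K_{1}\subset K_{2}\subset\cdots$ with $\bigcup_{i}K_{i}=M_{R}\cup\mathcal{H}$ and successively push the finitely many zeros lying in each $K_{i}\setminus K_{i-1}$ outward into $K_{i+1}\setminus K_{i}$, leaving the field unchanged on $K_{i-1}$. Iterating drives all zeros ``to infinity'' and yields a genuinely nowhere-vanishing field transverse to $\mathcal{H}$. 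The delicate point is precisely that this zero-removal step has no analogue in the compact case—as Example~\ref{exa: Vector field on B^2} shows—so the argument genuinely relies on non-compactness.

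Finally I would pass from the vector field to the associated line element field $\{V,-V\}$ simply by forgetting orientation; transversality and the absence of zeros are orientation-independent properties, so $\{V,-V\}$ is a non-vanishing line element field on $M_{R}\cup\mathcal{H}$ transverse to $\mathcal{H}$. This verifies exactly the hypothesis of the Global Transformation Theorem~\ref{Transformation-Theorem-(global)-1}, and applying that theorem to $(M,\tilde{g})$ produces the claimed equivalence with $df\neq0$ and $(V(f))(q)=((df)(V))(q)\neq0$ for every $q\in\mathcal{H}$, completing the proof.
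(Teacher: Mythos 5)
Your proposal is correct and follows essentially the same route as the paper: the paper states this corollary as an immediate consequence of the Global Transformation Theorem~\ref{Transformation-Theorem-(global)-1}, with the transversality hypothesis supplied by exactly the non-compact construction you reproduce (collar neighborhood, partition-of-unity patching, generic perturbation to isolate zeros, compact exhaustion pushing zeros to infinity, then passing to the line element field). There is no substantive difference between your argument and the paper's intended proof.
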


\begin{example}
Consider the classic type of a spacetime $M$ with signature-type
change which is obtained by cutting an $S^{4}$ along its equator
and joining it to the corresponding half of a de Sitter space, see
Figure~\ref{fig:No-Boundary Modell}. This is the universe model
obeying the ``no boundary'' condition.
The $4$-dimensional half sphere is homeomorphic to a disk (of corresponding
dimension) and there exists a non-vanishing line element field. However,
any such non-vanishing line element field $V$ will be tangent to
the equator $\mathcal{H}$ (which is the surface of signature-type
change) at some point $q\in\mathcal{H}$, see Example~\ref{exa: Vector field on B^2}.
Hence, we cannot extend $V$ smoothly across the equator to the Lorentzian
sector because $V\in T_{q}\mathcal{H}$ and thus $\exists\,q\in\mathcal{H}$
such that $V(f_{q})=df_{q}(V)=0$. Therefore, the radical is tangent
at some $q\in\mathcal{H}$, and the \textit{global version} of the
Transformation Theorem cannot apply.
\end{example}

\begin{figure}[H]
\centering{}\includegraphics[scale=0.6]{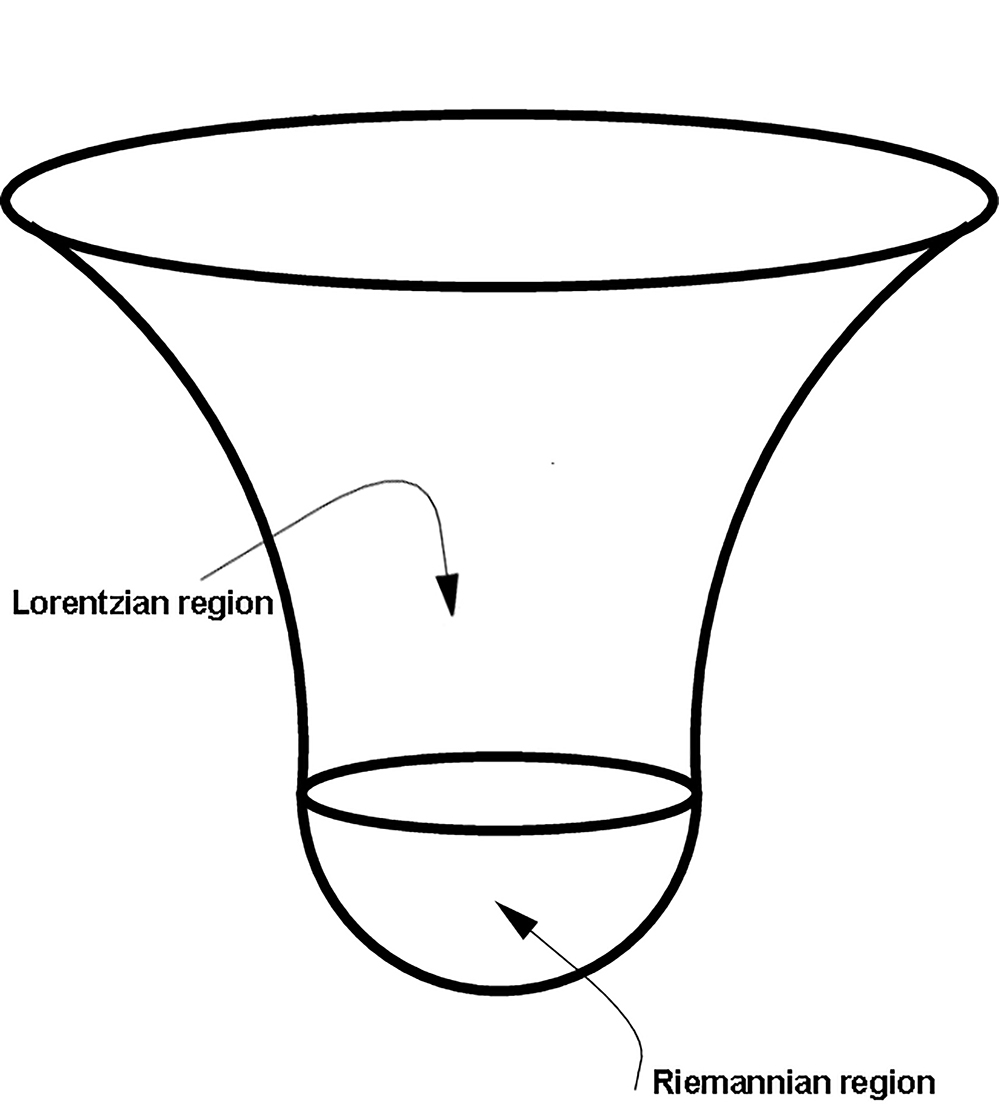}\caption{{\footnotesize{}\label{fig:No-Boundary Modell}}{\small{}Riemannian
and Lorentzian region in the Hartle-Hawking no-boundary model.}}
\end{figure}

\section{The Induced Metric on $\mathcal{H}$\label{sec: Induced metric}}

In the following we demonstrate that in general the induced metric
on the hypersurface $\mathcal{H}$ is either Riemannian or a positive
semi-definite pseudo metric.

\textbf{~}

Let $(M,\tilde{g})$ be an $n$-dimensional signature-type changing
manifold as introduced in Section~\ref{sec:Transformation-Prescription}
and $\mathcal{H}:=\{q\in M\!\!:g\!\!\mid_{q} is\;degenerate\}$ the
hypersurface of signature change. Furthermore, we assume again that
one component of $M\setminus\mathcal{H}$ is Riemannian and the other
one Lorentzian. Hence, $M\setminus\mathcal{H}$ is a union of two
semi-Riemannian manifolds with constant signature. As suggested by~\cite{Kossowksi + Kriele - Signature type change and absolute time in general relativity}
we consider the one-dimensional subspace of $T_{q}M$ defined as $\textrm{Rad}_{q}\coloneqq\{w\in T_{q}M\colon\tilde{g}(w,\centerdot)=0\}$,
for all $q\in\mathcal{H}$. Alternatively we can take the radical
as the kernel of the linear map $T_{q}M\longrightarrow T_{q}^{*}M\colon\,w\longmapsto(v\mapsto g(w,v))$.

\textbf{~}\\ If $v\in \textrm{Rad}_{q}$, then $\tilde{g}(v,\centerdot)=0$
must apply. Indeed, we have 
\[
\tilde{g}(v,\centerdot)=g(v,\centerdot)+\underset{1}{\underbrace{f(q)}}\underset{g(v,v)}{\underbrace{(v^{\flat}(v)}}\cdot\underset{g(v,\centerdot)}{\underbrace{v^{\flat}(\centerdot)}})=g(v,\centerdot)-g(v,\centerdot)=0.
\]
And because $\textrm{Rad}_{q}$ is a one-dimensional subspace of $T_{q}M$,
we have $\textrm{Rad}_{q}=\textrm{span}\{v\}$.

~
\begin{cor}
\label{par:Proposition. Closed and dense set}Let $(M,g)$ be an $n$-dimensional
signature-type changing manifold and $\mathcal{H}\subset M$ the hypersurface
of signature change. Then the set $\mathcal{H}$ is closed. Furthermore,
the set $M\setminus\mathcal{H}$ is dense in $M$ and open.
\end{cor}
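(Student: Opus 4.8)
The plan is to deduce all three assertions from the single fact that $\mathcal{H}$ is the regular level set of a smooth function, so that everything reduces to elementary point-set topology together with the submersion structure near $\mathcal{H}$. First I would treat openness of $M\setminus\mathcal{H}$ and closedness of $\mathcal{H}$ simultaneously, since one is the complement of the other. The cleanest route is to observe that non-degeneracy of $g\mid_{q}$ is an open condition: in any local chart $U$ around $q$, degeneracy means $\det([g_{\mu\nu}]_q)=0$, and although the numerical value of this determinant is chart-dependent, its vanishing is not, because under a change of coordinates it is multiplied by the square of the nowhere-zero Jacobian determinant. The map $q\mapsto\det([g_{\mu\nu}]_q)$ is smooth, hence continuous, so $\{q\in U\colon\det([g_{\mu\nu}]_q)\neq0\}$ is open in $U$ and therefore in $M$; taking the union over a chart cover gives that $M\setminus\mathcal{H}$ is open, and consequently $\mathcal{H}$ is closed. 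Equivalently, invoking the Transformation Prescription one may write $\mathcal{H}=f^{-1}(\{1\})$ as the preimage of the closed set $\{1\}$ under the continuous function $f$, with $M\setminus\mathcal{H}=f^{-1}(\mathbb{R}\setminus\{1\})$ the preimage of an open set.

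For density I would show that $\mathcal{H}$ has empty interior and then use the identity $\overline{M\setminus\mathcal{H}}=M\setminus\mathrm{int}(\mathcal{H})$, which immediately yields $\overline{M\setminus\mathcal{H}}=M$ once $\mathrm{int}(\mathcal{H})=\emptyset$. The emptiness of the interior is exactly where the transverse type-changing hypothesis enters: by definition it supplies $d(\det([g_{\mu\nu}])_q)\neq0$ for every $q\in\mathcal{H}$, so $0$ is a regular value of the determinant and $\mathcal{H}$ is a smoothly embedded hypersurface of codimension one. By the submersion theorem there is, around each $q\in\mathcal{H}$, a chart in which $\mathcal{H}$ is the coordinate hyperplane $\{x^{0}=0\}$; every neighborhood of $q$ then visibly contains points with $x^{0}\neq0$, i.e.\ points of $M\setminus\mathcal{H}$. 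Hence no point of $\mathcal{H}$ can be interior, $\mathrm{int}(\mathcal{H})=\emptyset$, and $M\setminus\mathcal{H}$ is dense.

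I do not expect a genuine obstacle here, as the statement is a direct topological consequence of $\mathcal{H}$ being a regular level set; the only points requiring care are the coordinate-independence of the vanishing locus of the determinant (handled by the Jacobian-squared factor above) and the explicit use of the regularity hypothesis $d(\det)\neq0$ to exclude interior points, since without some such non-degeneracy input a level set could in principle have nonempty interior. I would also remark that nothing in the argument uses compactness, global topology, or the transverse-versus-tangent nature of the radical, because only $d(\det([g_{\mu\nu}]))\neq0$ on $\mathcal{H}$ is needed; the conclusion therefore holds uniformly in the setting of Section~\ref{sec:Transformation-Prescription}.
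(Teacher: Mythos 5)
Your proof is correct, and it takes a genuinely different --- and more complete --- route than the paper's. The paper's entire proof is a single sentence: closedness of $\mathcal{H}$ is said to follow directly from Definition~\ref{Definition. signature change}, the implicit point being that the complement of the signature-change locus is open essentially by definition (a neighborhood on which the signature is constant around $p$ is also such a neighborhood for each of its points); openness of $M\setminus\mathcal{H}$ is then the same statement read contrapositively, and density is not argued at all. You instead prove closedness from the chart-analytic fact that non-degeneracy is an open condition (continuity of $\det([g_{\mu\nu}])$ together with chart-independence of its zero locus via the Jacobian-squared factor), and --- this is the real added value --- you prove density by invoking the standing transverse type-changing hypothesis $d(\det([g_{\mu\nu}])_{q})\neq0$, which exhibits $\mathcal{H}$ locally as a regular level set, hence an embedded hypersurface with empty interior. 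That step cannot be skipped: for a general singular semi-Riemannian metric the degeneracy locus can contain open sets, so density is not a purely formal consequence of the definitions, and your argument supplies exactly the input the paper leaves tacit. One small caution: your parenthetical alternative $\mathcal{H}=f^{-1}(\{1\})$ presupposes the Transformation Theorem (and, globally, its additional hypotheses on the line element field), so it should remain the aside you intend it to be, with the determinant argument as the primary one.
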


\begin{proof}
That the set $\mathcal{H}$ is closed follows directly from the following
Definition~\ref{Definition. signature change}.
\end{proof}
~

Finally, we apply the Transformation Prescription (Proposition~\ref{Proposition Transformation-Prescription-1})
to prove Theorem~\ref{par: Theorem Rad_q-1},
\begin{thm*}
\label{par: Theorem Rad_q}If $q\in\mathcal{H}$ and $x\notin \textrm{Rad}_{q}$,
then $\tilde{g}(x,x)>0$ holds for all $x\in T_{q}M$.
\end{thm*}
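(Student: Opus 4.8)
The plan is to evaluate $\tilde{g}$ at a point $q \in \mathcal{H}$ in a $g$-orthonormal frame adapted to $V$, exploiting the Transformation Prescription together with the fact that $f(q) = 1$ on $\mathcal{H}$. First I would recall from the discussion immediately preceding the statement that on $\mathcal{H}$ the radical is exactly $Rad_q = \mathrm{span}\{V\}$, where $V$ is the normalized timelike line element field with $g(V,V) = -1$. The whole argument then reduces to understanding the restriction of $\tilde{g}$ to the $g$-orthogonal splitting $T_q M = \mathrm{span}\{V\} \oplus V^{\perp}$.

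The key computation uses that, since $g$ is Lorentzian and $V$ is $g$-timelike, the hyperplane $V^{\perp}$ is spacelike, so that $g|_{V^{\perp}}$ is positive definite. Writing an arbitrary $x \in T_q M$ as $x = aV + w$ with $w \in V^{\perp}$, I would record the three relevant inner products at $f(q) = 1$: from $\tilde{g} = g + f\,V^{\flat}\otimes V^{\flat}$ one gets $\tilde{g}(V,V) = g(V,V) + f\,g(V,V)^2 = -1 + 1 = 0$, while $\tilde{g}(V,w) = g(V,w) + f\,g(V,V)\,g(V,w) = 0$ because $g(V,w)=0$, and $\tilde{g}(w,w) = g(w,w) + f\,g(V,w)^2 = g(w,w)$. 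Expanding bilinearly then yields $\tilde{g}(x,x) = a^2\,\tilde{g}(V,V) + 2a\,\tilde{g}(V,w) + \tilde{g}(w,w) = g(w,w)$.

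Since $g|_{V^{\perp}}$ is positive definite, $g(w,w) \geq 0$ with equality precisely when $w = 0$, i.e. precisely when $x = aV \in \mathrm{span}\{V\} = Rad_q$. Hence $x \notin Rad_q$ forces $w \neq 0$ and therefore $\tilde{g}(x,x) = g(w,w) > 0$, which is the assertion. Equivalently, in a full $g$-orthonormal frame $\{V, E_1, \ldots, E_{n-1}\}$ the matrix of $\tilde{g}$ at $q$ is $\mathrm{diag}(0,1,\ldots,1)$, manifestly positive semi-definite with kernel $\mathrm{span}\{V\}$.

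I expect no serious obstacle here: the only points requiring care are the identification $Rad_q = \mathrm{span}\{V\}$ (so that the hypothesis ``$x \notin Rad_q$'' is exactly ``$w \neq 0$''), which is already established before the statement, and the observation that $V^{\perp}$ is spacelike, which follows from $V$ being $g$-timelike in a Lorentzian metric. Notably the argument is insensitive to whether the radical is transverse or tangent to $\mathcal{H}$, since it uses only the value $f(q) = 1$ and the orthogonal decomposition, and not the behavior of $df$ along $\mathcal{H}$.
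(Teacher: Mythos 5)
Your proposal is correct and follows essentially the same route as the paper: both decompose $x$ $g$-orthogonally as a multiple of the normalized timelike direction plus a vector in $V^{\perp}$, use $f(q)=1$ to reduce $\tilde{g}(x,x)$ to the $g$-length of the perpendicular component, and conclude positivity from the fact that nonzero vectors $g$-orthogonal to a timelike vector are spacelike, with $x\notin Rad_{q}=\mathrm{span}\{V\}$ guaranteeing that component is nonzero. The only difference is presentational: you evaluate $\tilde{g}$ on the split pieces and invoke bilinearity, while the paper expands $g(v^{\perp}(x),v^{\perp}(x))$ and identifies it with $\tilde{g}(x,x)=g(x,x)+g(x,v)^{2}$.
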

\begin{proof}
We start by decomposing the vector $x$ into the sum of two components
with respect to a non-degenerate metric $g$, where one component
is parallel to $v$ and the other one is perpendicular to $v\colon$
$x=v^{\parallel}(x)+v^{\perp}(x)$ with $v^{\bigparallel}(x)=\frac{g(x,v)}{g(v,v)}v=-g(x,v)v$~\cite[p.50]{O'Neill}.
Recall that For $f(q)=1$ we have $\tilde{g}(\centerdot,\centerdot)=g(\centerdot,\centerdot)+g(v,\centerdot)g(v,\centerdot)$
on $T_{q}M$. Substituting for $v^{\bigparallel}(x)$ and rearranging
the vector decomposition, produces $v^{\perp}(x)=x+g(v,x)v$. Plugging
$v^{\perp}(x)$ into the metric gives

\[
g(v^{\perp}(x),v^{\perp}(x))=g(x+g(x,v)v,x+g(v,x)v)
\]

\[
=g(x,x)+2g(x,g(x,v)v)+g(x,v)^{2}g(v,v)
\]

\[
=g(x,x)+2g(x,v)^{2}+g(x,v)^{2}\underset{-1}{\underbrace{g(v,v)}}=g(x,x)+g(x,v)^{2}=\tilde{g}(x,x).
\]

\textbf{~}\\ Note that because of $g(v,v)=-1$, the vector $v$ is
timelike and $g$ is a Lorentzian metric. If a nonzero vector in $M$
is orthogonal to a timelike vector, then it must be spacelike~\cite{Naber},
hence, $v^{\perp}(x)$ is spacelike. Moreover, as $x\notin \textrm{Rad}_{q}=\textrm{span}\{v\}$
we know that $v^{\perp}(x)\neq0$, and therefore $\underset{\tilde{g}(x,x)}{\underbrace{g(v^{\perp}(x),v^{\perp}(x))}}>0$.
\end{proof}
\textbf{~}

As mentioned in Section~\ref{sec:Transformation-Prescription}, for
every $q\in\mathcal{H}$, the tangent space $T_{q}\mathcal{H}$ is
the kernel of the map $df_{q}$. Hence, $\ker(df_{q})=T_{q}\mathcal{H}$
is a vector subspace of $T_{q}M$. Provided that the radical is not
a vector subspace of $T_{q}\mathcal{H}$, that is $\textrm{Rad}_{q}\nsubseteq T_{q}\mathcal{H}$,
then the induced metric on $\mathcal{H}$ is Riemannian. This follows
directly from the proof of Theorem~\ref{par: Theorem Rad_q} because
for all $x\in T_{q}\mathcal{H}\setminus\{0\}$ the restriction of
the metric on $T_{q}M$ to the subspace $T_{q}\mathcal{H}$ is positive
definite. In the event of the radical being a vector subspace of $T_{q}\mathcal{H}$,
that is $\textrm{Rad}_{q}\subseteq T_{q}\mathcal{H}\subseteq T_{q}M$, then
according to the definition of $\textrm{Rad}_{q}$ the induced metric on $\mathcal{H}$
is degenerate. But then based on Proof~\ref{par: Theorem Rad_q}
we also have $\tilde{g}(x,x)\geq0$ for all $x\in T_{q}\mathcal{H}$.
Ultimately this leads to the conclusion that the induced metric on
$\mathcal{H}$ is a positive semi-definite pseudo metric with the
signature $(0,\underset{(n-2)\,\text{times}}{\underbrace{+,\ldots,+}})$.

\textbf{~}\\ Dependent on whether $\textrm{Rad}_{q}\subset T_{q}\mathcal{H}$,
or alternatively whether $\textrm{Rad}_{q}\subset\ker(df_{q})=T_{q}\mathcal{H}$,
the induced metric on the hypersurface of signature change $\mathcal{H}$
can be either Riemannian (and non-degenerate) or a positive semi-definite
pseudo-metric with signature $(0,\underset{(n-2)\,\text{times}}{\underbrace{+,\ldots,+}})$.
The latter one is degenerate if $\ker(df_{q})=T_{q}\mathcal{H}=\textrm{Rad}_{q}$.

~
\begin{rem}
A pseudo metric is considered to be a field of symmetric bilinear
forms which do not need to be everywhere non-degenerate. The main
point of pseudo-metric spaces is that we cannot use our concept of
distance to distinguish between different points, forcing us to think
of things in terms of equivalence classes where points declared to
have zero distance are considered equivalent.
\end{rem}

\begin{proof}
In our example, we have $\textrm{Rad}_{q}=\textrm{span}\{v\}$. And therefore,

$\textrm{Rad}_{q}=\textrm{span}\{v\}\subset T_{q}\mathcal{H}$ 

$\iff\textrm{span}\{v\}\subset\ker(df_{q})$

$\iff\textrm{span}\{v\}\subset\{w\in T_{q}M:df_{q}(w)=0\}$

$\iff df_{q}(v)=0$

$\iff v(f)=0$.
\end{proof}
\begin{acknowledgement*}
NER is greatly indebted to Richard Schoen for generously welcoming
her into his research group and to Alberto Cattaneo for affording
her creative independence throughout the duration of this research
endeavor. Moreover, NER acknowledges the partial support of the SNF
Grant No. 200021-227719. This research was (partly) supported by the
NCCR SwissMAP, funded by the Swiss National Science Foundation.
\end{acknowledgement*}

\end{document}